\documentclass[11pt]{article}

\usepackage[margin=1.15in]{geometry}
\usepackage{quoting}

\usepackage{stmaryrd}
\usepackage{amsfonts}
\usepackage{graphicx}
\usepackage{epstopdf}
\usepackage{amsmath}
\usepackage{cite}    
\usepackage{amsopn}
\usepackage{amsthm}
\usepackage{thmtools}
\usepackage{thm-restate}
\usepackage{bm}
\usepackage{amssymb}
\usepackage{algorithm}
\usepackage{algpseudocode}
\usepackage{subcaption}
\usepackage[percent]{overpic}

\usepackage[dvipsnames,table]{xcolor}
\definecolor{c0}{HTML}{ffb000}
\definecolor{c1}{HTML}{fe6100}
\definecolor{c2}{HTML}{dc267f}
\definecolor{c3}{HTML}{785ef0}
\definecolor{c4}{HTML}{648fff}
\definecolor{matlabblue}{RGB}{0,114,189}
\definecolor{matlaborange}{RGB}{217,83,25}
\definecolor{matlabyellow}{RGB}{237,177,32}
\definecolor{matlabpurple}{RGB}{126,47,142}
\definecolor{matlabgreen}{RGB}{119,172,48}
\definecolor{ibmblue}{RGB}{100,143,255}     
\definecolor{ibmorange}{RGB}{254,97,0}      
\definecolor{ibmgold}{RGB}{255,176,0}       
\definecolor{ibmmagenta}{RGB}{220,38,127}   
\usepackage{hyperref}
\hypersetup{
    colorlinks=true,      
    linkcolor=c2,       
    citecolor=c4,      
    filecolor=c4,    
    urlcolor=c4         
}
\usepackage[capitalize,nameinlink,noabbrev]{cleveref}
\crefformat{equation}{#2(#1)#3}
\crefmultiformat{equation}{#2(#1)#3}%
{ and~#2(#1)#3}{, #2(#1)#3}{ and~#2(#1)#3}

\usepackage{array}
\usepackage{colortbl}
\usepackage[labelfont=bf]{caption}
\usepackage[shortlabels]{enumitem}

\usepackage{tikz}
\usetikzlibrary{arrows.meta,positioning,fit,backgrounds,calc,patterns}
\usetikzlibrary{matrix}

\DeclareMathOperator{\tr}{tr}
\DeclareMathOperator{\Var}{Var}

\DeclareMathOperator{\range}{range}

\newcommand{\T}{\mathsf{T}}
\newcommand{\F}{\mathsf{F}}
\newcommand{\R}{\mathbb{R}}

\newcommand{\gaussian}{\operatorname{Gaussian}}
\newcommand{\Hsharp}{H_r^{\#}}
\newcommand{\Hsharpw}[1]{H_{r,#1}^{\#}}
\newcommand{\Hflat}{H_r^{\flat}}

\usepackage[labelfont=bf]{caption}

\usepackage[shortlabels]{enumitem}

\usepackage[dvipsnames]{xcolor}

\newtheorem{theorem}{Theorem}[section]
\newtheorem*{theorem*}{Theorem}

\newtheorem{fact}[theorem]{Fact}

\title{Hutch\#: Optimal non-adaptive Frobenius norm estimation}
\author{Tyler Chen\thanks{New York University Shanghai.
  \href{mailto:tyler.chen@nyu.edu}{tyler.chen@nyu.edu}}
\and Diana Halikias\thanks{New York University.
  \href{mailto:diana.halikias@nyu.edu}{diana.halikias@nyu.edu},
  \href{mailto:cmusco@nyu.edu}{cmusco@nyu.edu},
  \href{mailto:dup210@nyu.edu}{dup210@nyu.edu}}
\and Christopher Musco\footnotemark[2]
\and
David Persson\footnotemark[2]~\thanks{Flatiron Institute.
  \href{mailto:dpersson@flatironinstitute.org}{dpersson@flatironinstitute.org}}}

\begin{document}
\maketitle

\begin{abstract}

The Girard--Hutchinson estimator provides an extremely simple randomized estimate of the Frobenius norm of a matrix $\bm{A}$ that can only be accessed implicitly via matrix-vector products. In particular, if $\bm{\Omega}$ is a random Gaussian matrix with $r =  O(1/\varepsilon^2)$ columns, than $\frac{1}{r}\|\bm{A}\bm{\Omega}\|_\F^2$ provides a $(1\pm \varepsilon)$ multiplicative approximation to $\|\bm{A}\|_\F^2$  with high probability.

In this work, we introduce a closely related estimator, given by 
\begin{align*}
    {\frac{1}{r}\|\bm{A}\bm{\Omega}\|_\F^2 + \frac{1}{r}\|\bm{\Psi}^\T \bm{A}\|_\F^2 - \frac{1}{r^2}\|\bm{\Psi}^\T \bm{A}\bm{\Omega}\|_\F^2},
\end{align*}
where $\bm{\Psi}$ is a second, independent random Gaussian matrix with $r$ columns. We prove that this estimator yields a $(1\pm\varepsilon)$ multiplicative approximation to $\|\bm{A}\|_\F^2$ when $r = O(1/\varepsilon)$, a quadratic improvement over Girard--Hutchinson.
This dependence on $\varepsilon$ is optimal. Our method, which we call Hutch\# (pronounced ``Hutch sharp''), matches the complexity of the Hutch++ algorithm [Meyer, Musco, Musco, Woodruff, 2021].
However, unlike Hutch++, Hutch\# uses only \textit{non-adaptive} matrix-vector products with $\bm{A}$ and $\bm{A}^\T$ and   requires no orthogonalization or other advanced linear algebra steps. Thus, Hutch\# combines the simplicity of the Girard--Hutchinson estimator  and the optimal query complexity of Hutch++. 
\end{abstract}

\section{Introduction}
\label{sec:intro}
Consider a matrix $\bm{A} \in \mathbb{R}^{m \times n}$ implicitly accessible via matrix-vector products (matvecs):
\begin{equation*}
    \bm{x} \mapsto \bm{A}\bm{x}, \quad \bm{y} \mapsto \bm{A}^\T \bm{y}.
\end{equation*}
A fundamental task in linear algebra is to  compute an approximation, $F^2$, to the squared Frobenius norm $\|\bm{A}\|_\F^2 := \sum\limits_{i=1}^m \sum\limits_{j=1}^n \bm{A}_{i,j}^2$ satisfying 
\begin{equation}\label{eq:frob_estimation}
    \left|F^2 - \|\bm{A}\|_\F^2 \right| \leq \varepsilon \|\bm{A}\|_\F^2.
\end{equation}

This task arises when estimating the error of low-rank, hierarchical, and sparse matrix approximations~\cite{epperly2024efficient, gorman2019robust,smith2026adaptive, hausner2024neural, pritchard2025fast},  computing the Frobenius norm of implicit Jacobian and Hessian matrices in optimization pipelines~\cite{bai2021stabilizing, finlay2020train, tahmasebi2024universal},  learning structured linear operators~\cite{pmlr-v336-amsel26a}, as well as in many other applications \cite{haber2012effective}.

The simplest and most well-known method for Frobenius norm estimation is the Girard--Hutchinson estimator~\cite{girard1987trace,hutchinson}, which takes the form\footnote{In this work, we consider estimators that use independent Gaussians vectors. This is for simplicity of analysis. Similar bounds hold for estimators based on other distributions, such as vectors with Rademacher random entries, which were used in Hutchinson's original paper.}
\begin{align*}
H_r &:= \frac{1}{r}\|\bm{A}\bm{\Omega}\|_\F^2,  &
&\text{where $\bm{\Omega} \in \R^{n \times r}$ has i.i.d. standard Gaussian entries.}
\end{align*}
It is easy to show that this expression yields an unbiased estimator for $\|\bm{A}\|_\F^2$ with variance $\frac{2}{r}\| \bm{A} \|_{(4)}^4 \leq \frac{2}{r}\| \bm{A} \|_\F^4$, where $\|\cdot\|_{(4)}$ is the Schatten-4 norm \cite{avron2011randomized}.
It follows that, when $r = O(1/\varepsilon^2)$, the Girard--Hutchinson estimator has variance on the order of $\varepsilon^2 \|\bm{A}\|_\F^4$. Thus, by Chebyshev's inequality, it provides an approximation satisfying \cref{eq:frob_estimation} with high probability.

A strength of the Girard--Hutchinson estimator is its simplicity and  use of \textit{non-adaptive} matrix-vector products; we simply multiply by a set of random vectors that can all be chosen upfront. In contrast to adaptive algorithms, like Krylov subspace methods,  non-adaptive algorithms are beneficial in parallel and distributed settings, where matvecs can be executed concurrently, and in streaming settings, where $\bm{A}$ is too large to revisit or arrives through sequential updates~\cite{tropp2017practical, tropp2023randomized, clarkson2009numerical, dharangutte2021dynamic, al2026communication, madhavan2026flextrace}. Moreover, non-adaptivity is essential in some applications. Consider, for example, the task of estimating the Frobenius norm error between $\bm{A}$ and several different candidate approximations $\bm{B}_1, \bm{B}_2, \ldots, \bm{B}_k$. A non-adaptive method can \emph{reuse} matvecs with $\bm{A}$ to compute the necessary matvecs with $\bm{A} - \bm{B}_i$, while an adaptive method would need to use different matvecs for each, potentially incurring a $k\times$ penalty in error estimation cost. 

If one allows for adaptive methods, where the result of past matvecs can inform the construction of subsequent matvecs, the dependence on $\varepsilon$ can be improved. 
In particular, since $\|\bm{A}\|_\F^2 = \tr(\bm{A}^\T \bm{A})$, once can use trace estimators, where matvecs with $\bm{A}^\T \bm{A}$ are computed by sequential products with $\bm{A}$ and $\bm{A}^\T$.
Methods such as Hutch++, Nystr\"om++, and XTrace \cite{hutchpp, AHutchpp, epperly2024xtrace}  attain the same $(1 \pm \varepsilon)$ multiplicative guarantee of \cref{eq:frob_estimation} using just  $O(1/\varepsilon)$ adaptive matrix-vector products with $\bm{A}$ and $ \bm{A}^\T$.

It is known that the complexity of these methods is optimal for Frobenius norm estimation. No method can achieve the bound \cref{eq:frob_estimation} with fewer than $O(1/\varepsilon)$ matvecs, even if adaptivity is allowed \cite{meyer2024thesis}.\footnote{It is important that multiplication with both $\bm{A}$ and $\bm{A}^\T$ is allowed. Interestingly, if we only have matvec access to $\bm{A}$, the lower bound is $\Omega(1/\varepsilon^2)$ \cite{halikias2026transpose}, i.e., the Girard--Hutchinson estimator is optimal.} A natural question is if adaptivity is \emph{necessary} to achieve this optimal bound: 
\begin{quoting}[leftmargin=0.5em,rightmargin=0.5em,indentfirst=false]
\textbf{Q:} Is there a non-adaptive, optimal-complexity method for Frobenius norm estimation?
\end{quoting}

\subsection{An Optimal, Non-Adaptive Method}

We answer this question in the affirmative by proposing  a new  estimator for the Frobenius norm, which we call Hutch\#. This estimator takes the form:
\begin{equation}\label{eq:hutch_sharp}
    \Hsharp := {\frac{1}{r}\|\bm{A}\bm{\Omega}\|_\F^2 + \frac{1}{r}\|\bm{\Psi}^\T \bm{A}\|_\F^2 - \frac{1}{r^2}\|\bm{\Psi}^\T \bm{A}\bm{\Omega}\|_\F^2},
\end{equation}
where $\bm{\Omega} \in \R^{n \times r}$ and $\bm{\Psi} \in \R^{m \times r}$ are matrices with i.i.d. standard Gaussian entries. Hutch\# can be computed using $2r$ non-adaptive matrix-vector products with $\bm{A}$ and $\bm{A}^\T$. Moreover, it is nearly as simple as the original Girard--Hutchinson estimator.
Unlike Hutch++ and relatives, it does not require any advanced linear algebra kernels like orthogonalization or pseudoinverse. 

Our main theoretical result is that Hutch\# achieves the multiplicative guarantee of \cref{eq:frob_estimation} with an optimal $O(1/\varepsilon)$ queries. In particular, we show:

\begin{theorem}\label{thm:main}
    $\Hsharp(\bm{A})$ is an unbiased estimator of $\|\bm{A}\|_\F^2$, and its variance satisfies
    \begin{equation*}
        \Var\left(\Hsharp(\bm{A})\right) = \frac{2}{r^2}\|\bm{A}\|_{(4)}^4 +\frac{2}{r^2}\|\bm{A}\|_\F^4\leq \frac{4}{r^2}\|\bm{A}\|_\F^4.
    \end{equation*}
\end{theorem}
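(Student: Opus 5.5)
The plan is to rewrite the estimation error as a single bilinear expression in two independent, centered random matrices, and then compute the variance by conditioning on one of them. Introduce $\bm{P} := \frac{1}{r}\bm{\Omega}\bm{\Omega}^\T \in \R^{n\times n}$ and $\bm{Q} := \frac{1}{r}\bm{\Psi}\bm{\Psi}^\T \in \R^{m\times m}$. These are independent and satisfy $\mathbb{E}[\bm{P}] = \bm{I}$ and $\mathbb{E}[\bm{Q}] = \bm{I}$. By cyclicity of the trace,
\begin{equation*}
\Hsharp = \tr(\bm{A}\bm{P}\bm{A}^\T) + \tr(\bm{Q}\bm{A}\bm{A}^\T) - \tr(\bm{Q}\bm{A}\bm{P}\bm{A}^\T).
\end{equation*}
Expanding the product then gives the key identity
\begin{equation*}
\|\bm{A}\|_\F^2 - \Hsharp = \tr\bigl((\bm{I}-\bm{Q})\bm{A}(\bm{I}-\bm{P})\bm{A}^\T\bigr).
\end{equation*}
This is linear in $\bm{I}-\bm{P}$ and linear in $\bm{I}-\bm{Q}$, and both factors have mean zero and are independent. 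Hence $\mathbb{E}[\Hsharp] = \|\bm{A}\|_\F^2$, which proves unbiasedness.

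For the variance, I would condition on $\bm{\Psi}$ and set $\bm{M} := \bm{A}^\T(\bm{I}-\bm{Q})\bm{A}$, a symmetric $n\times n$ matrix. With this notation the error equals $\tr(\bm{M}) - \frac{1}{r}\|\bm{M}^{1/2}\ldots\|$-type quantities are not needed: it is simply $\tr(\bm{M}) - \tr(\bm{M}\bm{P})$, the negated error of a Girard--Hutchinson trace estimate of $\bm{M}$ with $r$ Gaussian vectors. For symmetric $\bm{M}$, a standard Gaussian quadratic-form computation gives
\begin{equation*}
\Var\bigl(\tr(\bm{M}\bm{P}) \mid \bm{\Psi}\bigr) = \frac{2}{r}\|\bm{M}\|_\F^2.
\end{equation*}
The mean is zero, so by the tower property $\Var(\Hsharp) = \frac{2}{r}\,\mathbb{E}\|\bm{M}\|_\F^2$.

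It remains to compute $\mathbb{E}\|\bm{M}\|_\F^2$; this Wishart-moment computation is the main step. Write $\bm{C} := \bm{A}\bm{A}^\T$. Then
\begin{equation*}
\|\bm{M}\|_\F^2 = \tr\bigl((\bm{I}-\bm{Q})\bm{C}(\bm{I}-\bm{Q})\bm{C}\bigr).
\end{equation*}
Expanding and using $\mathbb{E}[\bm{Q}] = \bm{I}$, the expectation becomes $-\tr(\bm{C}^2) + \mathbb{E}\,\tr(\bm{Q}\bm{C}\bm{Q}\bm{C})$. To evaluate the last term, expand $\bm{Q}\bm{C}\bm{Q} = \frac{1}{r^2}\sum_{i,j}\bm{\psi}_i\bm{\psi}_i^\T\bm{C}\bm{\psi}_j\bm{\psi}_j^\T$ over the columns $\bm{\psi}_i$ of $\bm{\Psi}$.
\begin{itemize}
\item The $r(r-1)$ off-diagonal terms $i\neq j$ each have expectation $\bm{C}$.
\item The $r$ diagonal terms each have expectation $\mathbb{E}[\bm{\psi}\bm{\psi}^\T\bm{C}\bm{\psi}\bm{\psi}^\T] = 2\bm{C} + \tr(\bm{C})\bm{I}$, by Isserlis' theorem.
\end{itemize}
Summing, $\mathbb{E}[\bm{Q}\bm{C}\bm{Q}] = \bm{C} + \frac{1}{r}\bigl(\bm{C} + \tr(\bm{C})\bm{I}\bigr)$, and therefore
\begin{equation*}
\mathbb{E}\|\bm{M}\|_\F^2 = \frac{1}{r}\bigl(\tr(\bm{C}^2) + \tr(\bm{C})^2\bigr) = \frac{1}{r}\bigl(\|\bm{A}\|_{(4)}^4 + \|\bm{A}\|_\F^4\bigr).
\end{equation*}
Substituting into $\Var(\Hsharp) = \frac{2}{r}\,\mathbb{E}\|\bm{M}\|_\F^2$ gives the claimed equality. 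The final inequality follows from $\|\bm{A}\|_{(4)}^4 = \sum_i \sigma_i^4 \le \bigl(\sum_i \sigma_i^2\bigr)^2 = \|\bm{A}\|_\F^4$.

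The step needing the most care is this fourth-moment computation, in particular the diagonal Isserlis term. I would double-check it by reducing to diagonal $\bm{C}$ via rotational invariance of $\bm{\Psi}$.
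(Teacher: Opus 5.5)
Your proof is correct and follows essentially the same route as the paper. Like the paper, you condition on $\bm\Psi$, apply the Girard--Hutchinson variance identity (\Cref{fact:hutch}) to the symmetric residual $\bm M=\bm A^\T\bm A-\frac1r\bm A^\T\bm\Psi\bm\Psi^\T\bm A$, and use that the conditional mean is the constant $\|\bm A\|_\F^2$. The only differences are that you derive $\mathbb E\|\bm M\|_\F^2=\frac1r\bigl(\|\bm A\|_{(4)}^4+\|\bm A\|_\F^4\bigr)$ yourself via Isserlis' theorem, whereas the paper cites it as \Cref{fact:frobeniusbound}, and that your bilinear identity for $\|\bm A\|_\F^2-\Hsharp$ repackages the paper's control-variate form; your diagonal term $2\bm C+\tr(\bm C)\bm I$ and the counting are right.
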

Setting $r = O(1/\varepsilon)$, we see that Hutch\# has variance on the order of $\varepsilon^2 \|\bm{A}\|_\F^4$, as desired. 
While the proof of \cref{thm:main} is not difficult, the Hutch\# estimator might seem unintuitive at first glance. To give some clarity, we observe that the method is actually quite closely related to the Hutch++ family of algorithms. These algorithms are based on estimating $\tr(\bm{A}^\T \bm{A})$ via a \emph{control-variate} method. In particular, a small number of matrix-vector products are used to obtain a low-rank approximation, $\bm{B}$, to $\bm{A}^\T \bm{A}$. Hutch++ then returns the estimate: 
\begin{align}
    \label{eq:hutchpp_template}
    \tr(\bm{B}) + \frac{1}{r}\tr\left(\bm{\Omega}^\T (\bm{A}^\T \bm{A} - \bm{B})\bm{\Omega}\right).
\end{align}
The variance improvement is obtained via a trade-off. Recall that the Girard--Hutchinson estimator has variance $\frac{2}{r}\|\bm{A}\|_{(4)}^4$. While this can always be upper bounded by $\frac{2}{r}\|\bm{A}\|_{F}^4$, it will be far smaller if $\bm{A}^\T \bm{A}$ has a flat spectrum. On the other hand, if $\bm{A}^\T \bm{A}$ has a quickly decaying spectrum, than $\bm{A}^\T \bm{A} - \bm{B}$ will be small, so the variance of the residual estimate, $\frac{1}{r}\tr\left(\bm{\Omega}^\T (\bm{A}^\T \bm{A} - \bm{B})\bm{\Omega}\right)$ is far smaller than the variance of a direct estimate for $\tr(\bm{A}^\T \bm{A})$.

Hutch++ and relatives compute the low-rank approximation, $\bm{B}$, using the randomized SVD~\cite{rsvd} or generalized Nystr\"om method~\cite{nakatsukasa2020fast}. For Frobenius norm estimation, these routines require adaptive matvecs with $\bm{A}$ and $\bm{A}^\T$. 
Hutch\# is obtained by replacing these methods with the much coarser low-rank approximation,  $\bm{B} = \frac{1}{r}\bm{A}^\T\bm{\Psi}\bm{\Psi}^\T \bm{A}$. The reader might recognize this as a standard ``randomized matrix multiplication'' approximation to $\bm{A}^\T\bm{A}$~\cite{DrineasKannanMahoney:2006,Sarlos:2006}. Plugging into \cref{eq:hutchpp_template}, we obtain
\begin{align*}
    \tr\left(\frac{1}{r}\bm{A}^\T\bm{\Psi}\bm{\Psi}^\T \bm{A}\right) + \frac{1}{r}\tr\left(\bm{\Omega}^\T \left(\bm{A}^\T \bm{A} - \frac{1}{r}\bm{A}^\T\bm{\Psi}\bm{\Psi}^\T \bm{A}\right)\bm{\Omega}\right) = \Hsharp(\bm{A}).
\end{align*}

Our main observation is that this  crude low-rank approximation suffices to achieve the same optimal rate as Hutch++ for the task of Frobenius norm estimation. The proof is straightforward, and can be found in \Cref{sec:analysis}.

\subsection{Additional Results}
The implementation of Hutch\# is particularly simple; other than the two Gaussian sketches $\bm A\bm\Omega$ and $\bm A^\T\bm\Psi$, it requires only cheap Frobenius norms and the small cross-product $\bm\Psi^\T\bm A\bm\Omega$, with no orthogonalization, pseudoinverse, or adaptive linear-algebraic step. We also study two variants of Hutch\# that trade some additional computation for other advantages. First, we introduce the weighted family of estimators parameterized by $0 \leq \alpha \leq 1$:
\begin{equation}\label{eq:weighted}
    \Hsharpw{\alpha}(\bm{A}) :=
    \frac{1+\alpha}{2r}\|\bm{A}\bm{\Omega}\|_\F^2 + \frac{1+\alpha}{2r}\|\bm{\Psi}^\T \bm{A}\|_\F^2 - \frac{\alpha}{r^2}\|\bm{\Psi}^\T \bm{A}\bm{\Omega}\|_\F^2.
\end{equation}
$\Hsharpw{\alpha}$ is a convex combination of Hutch\# and a symmetrized Girard--Hutchinson estimator, equal to the former when $\alpha = 1$ and the latter when $\alpha = 0$. It can be shown that the choice of $\alpha$ that minimizes $\Var(\Hsharpw{\alpha}(\bm{A}))$ depends only on the effective-rank quantity $R:=\|\bm A\|_\F^4/\|\bm A\|_{(4)}^4$, where $\|\bm{A}\|_{(4)}$ denotes the Schatten-4 norm. This motivates a ``spectrum-aware'' estimator that estimates $R$ from the same sketches used in~\cref{eq:weighted}. In experiments, we observe that this improved estimator universally outperforms  vanilla Hutch\# and the Girard--Hutchinson estimator in all cases. See \Cref{sec:experiments} for numerical results.

Second, we introduce Hutch$\flat$ (pronounced ``Hutch flat''), a non-adaptive estimator based on the generalized Nystr\"om approximation $\widetilde{\bm A}_{\mathrm{GN}}
:=
\bm A\bm\Omega
(\bm\Psi^\T\bm A\bm\Omega)^\dagger
\bm\Psi^\T\bm A$ \cite{nakatsukasa2020fast,clarkson2009numerical,woodruff2014sketching}. Defining $\widehat{\bm B}_\flat
:=
\widetilde{\bm A}_{\mathrm{GN}}^\T
\widetilde{\bm A}_{\mathrm{GN}}$, we estimate $\|\bm A\|_\F^2$ by
\begin{equation}\label{eq:hutch_flat}
    \Hflat :=
\tr(\widehat{\bm B}_\flat)
+
\frac{1}{r}\tr\left(
\bm G^\T(\bm A^\T\bm A-\widehat{\bm B}_\flat)\bm G
\right),
\end{equation}
where $\bm{G} \in \R^{n \times r}$ is a third random Gaussian matrix. All necessary matvecs can be performed non-adaptively. Hutch$\flat$ requires more postprocessing than Hutch\#, including a pseudoinverse, but can achieve much lower variance when $\bm{A}$'s singular values  decay rapidly.

\subsection{Notation}
Before presenting an analysis of Hutch\#, we describe notation used throughout the paper.
For $\bm{A} \in \R^{m \times n}$, we write $\bm{A}_{i,j}$ for its $(i,j)$ entry, $\bm{A}^\T$ for its transpose, and $\sigma_1(\bm{A}) \geq \sigma_2(\bm{A}) \geq \cdots \geq \sigma_{\min\{m,n\}}(\bm{A}) \geq 0$ for its singular values.
We write $\tr(\bm{A})$ for the trace of a square matrix.
We let $\|\bm{A}\|_\F$ denote the Frobenius norm, $\|\bm{A}\|_\F := \big(\sum_{i,j} \bm{A}_{i,j}^2\big)^{1/2} = \big(\sum_{j} \sigma_j(\bm{A})^2\big)^{1/2} = \tr(\bm{A}^\T \bm{A})^{1/2}$, and $\|\bm{A}\|_{(4)}$ the Schatten-4 norm,
\begin{equation}\label{eq:schatten4}
    \|\bm{A}\|_{(4)} := \bigg(\sum_{j} \sigma_j(\bm{A})^4\bigg)^{1/4},
    \quad\text{so}\quad
    \|\bm{A}\|_{(4)}^4 = \|\bm{A}^\T \bm{A}\|_\F^2 = \|\bm{A}\bm{A}^\T\|_\F^2.
\end{equation}
We always have $\|\bm{A}\|_{(4)} \leq \|\bm{A}\|_\F$, with equality when $\bm{A}$ has rank one.
At the other extreme, when $\bm{A}$ has $k$ equal non-zero singular values, $\|\bm{A}\|_{(4)}^4 = \frac{1}{k}\|\bm{A}\|_\F^4$.
So, the ratio $\|\bm{A}\|_\F^4 / \|\bm{A}\|_{(4)}^4 \in [1, \min\{m,n\}]$ can be viewed as a measure of the ``flatness'' of $\bm{A}$'s spectrum.

We write $\bm{\Omega} \sim \gaussian(n,r)$ to indicate that $\bm{\Omega} \in \R^{n \times r}$ is a random matrix with i.i.d. standard normal entries.
$\mathbb{E}[X]$ and $\Var(X)$ denote the expectation and variance of a random variable $X$, and $\mathbb{E}[X \mid Y]$ and $\Var(X \mid Y)$ the same conditioned on a second random variable $Y$.
Our variance bounds use the law of total variance, $\Var(X) = \mathbb{E}\left[\Var(X \mid Y)\right] + \Var\left(\mathbb{E}[X \mid Y]\right)$.

\section{Main Analysis}
\label{sec:analysis}
In this section, we analyze the Hutch\# estimator, $\Hsharp$, described in \cref{eq:hutch_sharp}. As discussed, our analysis is based on the observation that: 
\begin{equation}\label{eq:hutch++relation}
     \Hsharp(\bm{A}) = \tr\left(\frac{1}{r}\bm{A}^\T \bm{\Psi} \bm{\Psi}^\T \bm{A} \right) + \frac{1}{r}\tr\left( \bm{\Omega}^\T \left( \bm{A}^\T \bm{A}  - \frac{1}{r} \bm{A}^\T \bm{\Psi} \bm{\Psi}^\T \bm{A} \right)\bm{\Omega}\right).
\end{equation}
The analysis of Hutch\# then follows immediately from the following well-known facts:
\begin{fact}[See, e.g., \cite{avron2011randomized}]\label{fact:hutch}
Let $\bm{B} \in \mathbb{R}^{n \times n}$ be symmetric and let $\bm{\Omega} \sim \gaussian(n,r)$. 
Then,
\begin{equation}\label{eq:standardequalitues}
    \mathbb{E}\left[\frac{1}{r} \tr\left(\bm{\Omega}^\T \bm{B} \bm{\Omega}\right)\right] = \tr(\bm{B}), \qquad \Var\left(\frac{1}{r}\tr\left(\bm{\Omega}^\T \bm{B} \bm{\Omega}\right)\right) = \frac{2}{r}\|\bm{B}\|_\F^2.
\end{equation}
\end{fact}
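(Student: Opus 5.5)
The plan is to reduce to independent scalar chi-squared variables via the spectral theorem and the rotational invariance of the Gaussian distribution. Since $\bm{B}$ is symmetric, write $\bm{B} = \bm{U}\bm{\Lambda}\bm{U}^\T$ with $\bm{U}$ orthogonal and $\bm{\Lambda} = \mathrm{diag}(\lambda_1,\ldots,\lambda_n)$. Set $\bm{G} := \bm{U}^\T\bm{\Omega}$. The columns of $\bm{\Omega}$ are i.i.d.\ $N(\bm{0},\bm{I}_n)$, and $\bm{U}^\T N(\bm{0},\bm{I}_n) \overset{d}{=} N(\bm{0},\bm{I}_n)$. Hence $\bm{G}\sim\gaussian(n,r)$, i.e.\ its entries $g_{ij}$ are i.i.d.\ standard normal. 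By cyclicity of the trace,
\begin{equation*}
    \tr\left(\bm{\Omega}^\T\bm{B}\bm{\Omega}\right) = \tr\left(\bm{G}^\T\bm{\Lambda}\bm{G}\right) = \sum_{j=1}^r\sum_{i=1}^n \lambda_i\, g_{ij}^2 .
\end{equation*}

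For the expectation, I will use $\mathbb{E}[g_{ij}^2]=1$. This gives $\mathbb{E}\left[\tr(\bm{\Omega}^\T\bm{B}\bm{\Omega})\right] = r\sum_i\lambda_i = r\,\tr(\bm{B})$, and dividing by $r$ yields the first identity in \cref{eq:standardequalitues}.

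For the variance, the $g_{ij}^2$ are independent with $\Var(g_{ij}^2)=\mathbb{E}[g^4]-1 = 3-1 = 2$. Variance is therefore additive over the double sum:
\begin{equation*}
    \Var\left(\tr(\bm{\Omega}^\T\bm{B}\bm{\Omega})\right) = \sum_{j=1}^r\sum_{i=1}^n 2\lambda_i^2 = 2r\,\|\bm{B}\|_\F^2,
\end{equation*}
where I use $\sum_i\lambda_i^2 = \|\bm{B}\|_\F^2$ for symmetric $\bm{B}$. Multiplying by the factor $1/r^2$ then gives $\frac{2}{r}\|\bm{B}\|_\F^2$.

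There is no real obstacle here. The only points that need care are justifying that $\bm{U}^\T\bm{\Omega}$ again has i.i.d.\ standard Gaussian entries, which is rotational invariance applied column by column, and recalling the fourth moment $\mathbb{E}[g^4]=3$. Symmetry of $\bm{B}$ is used precisely to get a real orthogonal diagonalization, and hence the identity $\sum_i\lambda_i^2=\|\bm{B}\|_\F^2$.
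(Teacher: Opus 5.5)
Your proof is correct: diagonalizing $\bm{B}$, using rotational invariance so that $\bm{U}^\T\bm{\Omega}$ is again standard Gaussian, and reducing to a weighted sum of $nr$ independent $\chi^2_1$ variables yields both identities exactly. The paper gives no proof of its own and only cites Avron--Toledo, where the Gaussian case is handled by essentially this same eigendecomposition argument.
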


\begin{fact}[See, e.g. \cite{puchkin2025}]\label{fact:frobeniusbound}
Let $\bm{A} \in \mathbb{R}^{m \times n}$ and $\bm{\Psi} \sim \gaussian(m,r)$. Then,
\begin{equation*}
     \mathbb{E}\left\|\bm{A}^\T \bm{A} - \frac{1}{r}\bm{A}^\T \bm{\Psi} \bm{\Psi}^\T \bm{A}\right\|_\F^2 = \frac{1}{r}\|\bm{A}\|_{(4)}^4 +\frac{1}{r}\|\bm{A}\|_\F^4.
\end{equation*}
\end{fact}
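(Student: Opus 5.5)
We prove the identity by writing the sketched Gram matrix as an average of $r$ independent rank-one matrices and applying the usual variance-of-a-mean computation in the Frobenius inner product $\langle \bm{X},\bm{Y}\rangle = \tr(\bm{X}^\T\bm{Y})$. Let $\bm{\psi}_1,\dots,\bm{\psi}_r$ be the columns of $\bm{\Psi}$ and set $\bm{a}_k := \bm{A}^\T\bm{\psi}_k \in \R^n$. Then
\begin{equation*}
\frac{1}{r}\bm{A}^\T\bm{\Psi}\bm{\Psi}^\T\bm{A} = \frac{1}{r}\sum_{k=1}^r \bm{X}_k, \qquad \bm{X}_k := \bm{a}_k\bm{a}_k^\T,
\end{equation*}
where the $\bm{X}_k$ are i.i.d. Since $\mathbb{E}[\bm{\psi}_k\bm{\psi}_k^\T] = \bm{I}$, each satisfies $\mathbb{E}[\bm{X}_k] = \bm{A}^\T\bm{A} =: \bm{M}$. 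Hence the error matrix is $\frac{1}{r}\sum_{k=1}^r (\bm{M}-\bm{X}_k)$, a normalized sum of i.i.d. mean-zero random matrices.

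Expanding the squared Frobenius norm gives
\begin{equation*}
\mathbb{E}\Big\|\frac{1}{r}\sum_{k}(\bm{M}-\bm{X}_k)\Big\|_\F^2 = \frac{1}{r^2}\sum_{k,l}\mathbb{E}\,\tr\big((\bm{M}-\bm{X}_k)^\T(\bm{M}-\bm{X}_l)\big).
\end{equation*}
For $k\neq l$, independence and the zero mean make each cross term vanish. The quantity therefore reduces to
\begin{equation*}
\frac{1}{r}\,\mathbb{E}\|\bm{X}_1-\bm{M}\|_\F^2 = \frac{1}{r}\Big(\mathbb{E}\|\bm{X}_1\|_\F^2 - \|\bm{M}\|_\F^2\Big),
\end{equation*}
where the last step again uses $\mathbb{E}[\bm{X}_1] = \bm{M}$. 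By \cref{eq:schatten4}, $\|\bm{M}\|_\F^2 = \|\bm{A}\|_{(4)}^4$.

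The only real computation, and the step I expect to need the most care, is the fourth moment $\mathbb{E}\|\bm{X}_1\|_\F^2$. Since $\bm{X}_1$ is rank one,
\begin{equation*}
\|\bm{X}_1\|_\F^2 = \|\bm{a}_1\|_2^4 = \big(\bm{\psi}_1^\T\bm{A}\bm{A}^\T\bm{\psi}_1\big)^2.
\end{equation*}
I will apply \cref{fact:hutch} with $r=1$ and the symmetric matrix $\bm{B}=\bm{A}\bm{A}^\T$. It gives mean $\tr(\bm{A}\bm{A}^\T) = \|\bm{A}\|_\F^2$ and variance $2\|\bm{A}\bm{A}^\T\|_\F^2 = 2\|\bm{A}\|_{(4)}^4$. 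Therefore
\begin{equation*}
\mathbb{E}\|\bm{X}_1\|_\F^2 = \Var + (\text{mean})^2 = 2\|\bm{A}\|_{(4)}^4 + \|\bm{A}\|_\F^4.
\end{equation*}
Substituting back gives $\frac{1}{r}\big(2\|\bm{A}\|_{(4)}^4 + \|\bm{A}\|_\F^4 - \|\bm{A}\|_{(4)}^4\big) = \frac{1}{r}\|\bm{A}\|_{(4)}^4 + \frac{1}{r}\|\bm{A}\|_\F^4$, as claimed.
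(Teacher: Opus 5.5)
Your proof is correct. The paper gives no derivation of this fact: it cites it from the literature and remarks that the cruder approximate-matrix-multiplication bound $\frac{2}{r}\|\bm A\|_\F^4$ would already suffice for the $O(1/\varepsilon)$ rate.

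Your route writes $\frac1r\bm A^\T\bm\Psi\bm\Psi^\T\bm A$ as a mean of $r$ i.i.d.\ unbiased rank-one matrices, so the factor $1/r$ comes from a variance-of-a-mean computation. The one remaining fourth moment, $\mathbb{E}(\bm\psi_1^\T\bm A\bm A^\T\bm\psi_1)^2 = 2\|\bm A\|_{(4)}^4+\|\bm A\|_\F^4$, you read off from \cref{fact:hutch} with $r=1$. This makes the argument self-contained: the proof of \cref{thm:main} then rests on \cref{fact:hutch} alone. It also shows where the two terms come from. The $\|\bm A\|_\F^4$ term is the squared mean of the Gaussian quadratic form, and the $\|\bm A\|_{(4)}^4$ term is its variance $2\|\bm A\|_{(4)}^4$ minus $\|\bm A^\T\bm A\|_\F^2$. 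The citation buys only brevity.

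An equally short derivation is available from a tool the paper already uses in its appendix. Apply \cref{eq:pbk_schatten4} to $\bm\Psi^\T\bm A=\bm I_r\bm\Psi^\T\bm A$ and use $\|\bm\Psi^\T\bm A\|_{(4)}^4=\|\bm A^\T\bm\Psi\bm\Psi^\T\bm A\|_\F^2$. This gives $\mathbb{E}\|\bm A^\T\bm\Psi\bm\Psi^\T\bm A\|_\F^2=(r^2+r)\|\bm A\|_{(4)}^4+r\|\bm A\|_\F^4$. Expanding the square with $\mathbb{E}[\bm A^\T\bm\Psi\bm\Psi^\T\bm A]=r\bm A^\T\bm A$ then yields the same identity. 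Your version has the advantage of needing only the scalar fourth moment of a Gaussian quadratic form, rather than a matrix Wick-type identity.
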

We remark that our targeted $O(1/\varepsilon)$ rate for Hutch\# can be obtained by replacing \Cref{fact:frobeniusbound} with the less precise upper bound, $\mathbb{E}\left\|\bm{A}^\T \bm{A} - \frac{1}{r}\bm{A}^\T \bm{\Psi} \bm{\Psi}^\T \bm{A}\right\|_\F^2 \leq \frac{2}{r}\|\bm{A}\|_\F^4$. This is the standard variance bound that arises in the analysis of ``approximate matrix multipication'' methods in Randomized Numerical Linear Algebra (RandNLA) \cite{DrineasKannanMahoney:2006,Sarlos:2006}.

\begin{proof}[Proof of \cref{thm:main}]
    From \cref{eq:hutch++relation} and \cref{fact:hutch}, it is immediate that $\Hsharp(\bm{A})$ is unbiased. 
    We proceed with computing the variance. 
    By expressing $\Hsharp(\bm{A})$ as in \cref{eq:hutch++relation} and using the law of total variance, we have
    \begin{align*}
        \Var(\Hsharp(\bm{A})) &= \mathbb{E}\left[\Var\left(\Hsharp(\bm{A}) \middle| \bm{\Psi}\right)\right] + \Var\left(\mathbb{E}\left[\Hsharp(\bm{A}) \middle| \bm{\Psi}\right]\right)\\
        &=\frac{2}{r}\mathbb{E}\left\|\bm{A}^\T \bm{A} - \frac{1}{r}\bm{A}^\T \bm{\Psi} \bm{\Psi}^\T \bm{A}\right\|_\F^2 + \Var(\|\bm{A}\|_\F^2) \tag{\Cref{fact:hutch}}\\
        &=\frac{2}{r^2}\|\bm{A}\|_{(4)}^4 +\frac{2}{r^2}\|\bm{A}\|_\F^4. \tag{\Cref{fact:frobeniusbound} and $\Var(\|\bm{A}\|_\F^2) = 0$}
    \end{align*}
    The final inequality in the statement of \Cref{thm:main} follows from $\|\bm{A}\|_{(4)}\leq \|\bm{A}\|_\F$.
\end{proof}

We note that, with \Cref{thm:main} in place, we can obtain probability bounds on the error of Hutch\# via Chebyshev's inequality. In particular, we have that:
\begin{align*}
    \Pr\left[\left|\Hsharp(\bm{A}) - \|\bm{A}\|_\F^2\right| \geq \varepsilon \|\bm{A}\|_\F^2\right]
    \leq \frac{\Var\left(\Hsharp(\bm{A})\right)}{\varepsilon^2 \|\bm{A}\|_\F^4}
    \leq \frac{4}{\varepsilon^2 r^2}.
\end{align*}
So, for any $\delta \in (0,1)$, taking $r \geq {2}/({\varepsilon\sqrt{\delta})}$ suffices for Hutch\# to satisfy \cref{eq:frob_estimation} with probability at least $1-\delta$.
That is, $O\left(\frac{1}{\varepsilon\sqrt{\delta}}\right)$ matrix-vector products suffice to achieve the guarantee of \cref{eq:frob_estimation}.

We note that the  $1/\sqrt{\delta}$ dependence on the failure probability can be improved to $\log(1/\delta)$ one using the standard median-of-means trick \cite{alon1999space}. We can issue $O(\log(1/\delta))$ independent runs of Hutch\#, each with $r = O(1/\varepsilon)$, and return the median of the resulting estimates. By a standard Chernoff bound, we will obtain error $\epsilon \|\bm{A}\|_{\F}^2$ with probability at least $1-\delta$.

\section{An Improved Weighted Estimator}
\label{sec:weighted}

Hutch\# requires $O(1/\varepsilon)$ matvecs in the worst case to achieve  $\epsilon \|\bm{A}\|_{\F}^2$ error, compared to  $O(1/\varepsilon^2)$ for Girard--Hutchinson.
However, Hutch\# does not always outperform Girard--Hutchinson.
Indeed, recall that the variance of Girard--Hutchinson is on the order of $\frac{1}{r}\|\bm{A}\|_{(4)}^4$ while the variance of Hutch\# is on the order of $\frac{1}{r^2}\|\bm{A}\|_\F^4$. When $\bm{A}$ has a flat spectrum $\|\bm{A}\|_{(4)}^4 \approx \frac{1}{n}\|\bm{A}\|_\F^4$, so we can easily have that $\frac{1}{r}\|\bm{A}\|_{(4)}^4 < \frac{1}{r^2}\|\bm{A}\|_\F^4$ for most values of $r$. 

To address this issue, we introduce an alternative estimator that interpolates between Hutch\# and Girard--Hutchinson, always providing better variance than either. 
In particular, let $H_r(\bm{A}) := \frac{1}{2r}\left(\|\bm{A}\bm{\Omega}\|_\F^2 + \|\bm{\Psi}^\T\bm{A}\|_\F^2\right)$ be a symmetrized Girard--Hutchinson estimator. For $\alpha \in [0,1]$, we consider the convex combination of $H_r$ and $\Hsharp$ given by:
\begin{align*}
    \Hsharpw{\alpha}(\bm{A})
    &:= (1-\alpha)\, H_r(\bm{A}) + \alpha\, \Hsharp(\bm{A})\\
    &\phantom{:}= \frac{1+\alpha}{2r}\|\bm{A}\bm{\Omega}\|_\F^2 + \frac{1+\alpha}{2r}\|\bm{\Psi}^\T \bm{A}\|_\F^2 - \frac{\alpha}{r^2}\|\bm{\Psi}^\T \bm{A}\bm{\Omega}\|_\F^2,
\end{align*}
A direct computation reveals the variance of this estimator as:
\begin{equation*}
    \Var(\Hsharpw{\alpha}(\bm{A})) = \frac{(1-\alpha)^2}{r} \|\bm{A}\|_{(4)}^4 + \frac{2\alpha^2}{r^2} \left(\|\bm{A}\|_{(4)}^4 + \|\bm{A}\|_\F^4 \right). 
\end{equation*}

Let $R = \|\bm{A}\|_\F^4 / \|\bm{A}\|_{(4)}^4$ and $T = \frac{R+1}{r}$. 
The variance of $\Hsharpw{\alpha}$ is minimized for $\alpha^* = \frac{1}{1+2T}\in [0,1]$.
Since both Girard--Hutchinson and Hutch\# are special cases of this interpolating estimator, $\Hsharpw{\alpha^*}$ has smaller variance than each of these estimators.
\subsection{Practical Implementation}\label{sec:heuristic}
While we cannot efficiently compute $\alpha^*$, the quantity can be easily approximated using non-adaptive matvec queries. In particular, we need to compute approximations to $\|\bm{A}\|_\F^4$ and $\|\bm{A}\|_{(4)}^4$ so that we can approximate the effective-rank $R$. The former can be approximated using the squared Girard--Hutchinson estimator:
\begin{align}
    \label{eq:frob_est}
    \frac{1}{r^2}\|\bm{A}\bm{\Omega}\|_\F^4 \approx \|\bm{A}\|_\F^4.
\end{align}
To approximate the Schatten-4 norm, we use that $\|\bm{A}\|_{(4)}^4 = \|\bm{A}^\T\bm{A}\|_\F^2$. Ideally, we would like to apply the standard Girard--Hutchinson estimator, $\frac{1}{r}\|\bm{A}^\T\bm{A}\bm{\Omega}\|_\F^2$, but this would require adaptive matrix-vector products. 
Instead, we split the sketch in half: let $\bm{\Omega}_1$ contain the first $r/2$ columns of $\bm{\Omega}$ and let $\bm{\Omega}_2$ contain the remaining $r/2$, so that $\bm{\Omega}_1$ and $\bm{\Omega}_2$ are independent Gaussian matrices. 
Then $\frac{2}{r}\|\bm{\Omega}_1^\T\bm{A}^\T\bm{A}\bm{\Omega}_2\|_\F^2$ is an unbiased estimator for $\frac{2}{r}\|\bm{A}^\T\bm{A}\bm{\Omega}_2\|_\F^2$, which in turn is an unbiased estimator for $\|\bm{A}^\T\bm{A}\|_\F^4$. So we obtain the non-adaptive estimator:
\begin{align}
    \label{eq:schatten_est}
\|\bm{A}\|_{(4)}^4 \approx \frac{4}{r^2}\|\bm{\Omega}_1^\T\bm{A}^\T\bm{A}\bm{\Omega}_2\|_\F^2.
\end{align}
For both \cref{eq:frob_est} and \cref{eq:schatten_est}, it is not hard to check that setting $r = O(1)$ yields a constant-factor multiplicative approximation to $\|\bm{A}\|_\F^4$ and $\|\bm{A}\|_{(4)}^4$ with constant probability. Such approximations suffice to obtain $\widehat\alpha$ for which $\Var(\Hsharpw{\widehat\alpha}(\bm{A}))$ is within a constant factor of $\Var(\Hsharpw{\alpha^*}(\bm{A}))$. 

Concretely, suppose that we obtain estimates $\widehat{S}$ and $\widehat{F}$ that satisfy, for $c \geq 1$,
\begin{align*}
c^{-1} \| \bm{A} \|_{(4)}^4 &\leq \widehat{S} \leq c \|\bm{A} \|_{(4)}^4
& &\text{and}&
c^{-1} \| \bm{A} \|_{\F}^4 &\leq \widehat{F} \leq c \|\bm{A} \|_{\F}^4.
\end{align*}
Define $\widehat{T} := \frac{\widehat{F}/\widehat{S}+1}{r}$ and $\widehat{\alpha} = \frac{1}{1+2\widehat{T}}$. 
Then we have:
\begin{align}
    \label{eq:noisy_estimate_variance_bound}
\Var(\Hsharpw{\widehat\alpha}(\bm{A})) \leq \frac{(c+c^{-1})^2}{4}\Var(\Hsharpw{\alpha^*}(\bm{A})).
\end{align}
To see this, observe that the variance of $\Hsharpw{\alpha}$ can be written compactly as
\begin{align*}
    \Var(\Hsharpw{\alpha}(\bm{A})) = \frac{\|\bm{A}\|_{(4)}^4}{r}\, h(\alpha),
    \qquad\text{where}\qquad
    h(\alpha) := (1-\alpha)^2 + 2T\alpha^2.
\end{align*}
$\alpha^* = 1/(1+2T)$ minimizes $h$, and $h(\alpha^*) = \frac{2T}{2T+1}$.
Our assumptions on $\widehat{S}$ and $\widehat{F}$ give
$c^{-2}(R+1) \leq c^{-2}R + 1 \leq \widehat{F}/\widehat{S} + 1 \leq c^{2}R + 1 \leq c^{2}(R+1)$,
so $\widehat{T} = \kappa T$ for some $\kappa \in [c^{-2},c^{2}]$.
It follows that, 
\begin{align*}
    \frac{h(\widehat\alpha)}{h(\alpha^*)}
    = \frac{(2\kappa^2 T + 1)(2T+1)}{(1+2\kappa T)^2}
    = 1 + \frac{2(\kappa-1)^2 T}{(1+2\kappa T)^2}
    \leq 1 + \frac{(\kappa-1)^2}{4\kappa}
    = \frac{1}{4}\left(\sqrt{\kappa} + \frac{1}{\sqrt{\kappa}}\right)^2,
\end{align*}
where the inequality follows from $(1+2\kappa T)^2 \geq 8\kappa T$.
The final expression is maximized at $\kappa = c^{\pm2}$, which gives the stated bound in \cref{eq:noisy_estimate_variance_bound}.

We remark that, in our experiments, we do not use fresh matrix-vector products to estimate $\|\bm{A}\|_\F^4$ and $\|\bm{A}\|_{(4)}^4$ when calculating $\hat{\alpha}$. Instead, we simply reuse the same Gaussian sketch, $\bm{A}\bm{\Omega}$, that we used to compute $\Hsharpw{\alpha}(\bm{A})$. \textcolor{black}{Additional effort would be necessary to ensure that this does not lead to any dependency issues in the theoretical analysis, but the matvec reuse seems to show no issues experimentally.}
\section{An Estimator Based on the Generalized Nystr\"om Method}
\label{sec:gennystrompp_variance}

In this section, we introduce a final non-adaptive method for Frobenius norm approximation based on the generalized Nystr\"om method, which is a randomized low-rank approximation algorithm that only requires non-adaptive matrix-vector products with $\bm{A}$ and $\bm{A}^\T$ \cite{nakatsukasa2020fast,clarkson2009numerical,woodruff2014sketching}. This alternative method, which we call Hutch$\flat$ (``Hutch flat''), sacrifices some of the simplicity of Hutch\#, although it is by no means complicated. However, Hutch$\flat$ has the advantage that it can perform better than Hutch$\#$ for matrices that have rapidly decaying spectra. 

Concretely, Hutch$\flat$  requires three random Gaussian sketching matrices to estimate the Frobenius norm of a matrix $\bm A\in\mathbb R^{m\times n}$. In particular, for any integer $r\geq 16$, we draw\footnote{The sketch sizes $(2r,4r,2r)$ are chosen to keep the analysis simple. We have not attempted to optimize these proportions. In any case, all sketches should be chosen with $O(r)$ columns.}:
\begin{align*}
    \bm\Omega&\sim\gaussian(n,2r), &
    \bm\Psi&\sim\gaussian(m,4r), &
    \bm G&\sim\gaussian(n,2r).
\end{align*} 
Define the generalized Nystr\"om approximation
\begin{equation}\label{eq:gn_approximation}
    \bm B=\bm A\bm\Omega
      (\bm\Psi^\T\bm A\bm\Omega)^\dagger\bm\Psi^\T\bm A.
\end{equation}
We then define the Hutch$\flat$ estimator as: 
\begin{equation}\label{eq:gnpp_estimator}
    \Hflat(\bm A)
    =\|\bm B\|_\F^2
      +\frac{1}{2r}\left(\|\bm A\bm G\|_\F^2-\|\bm B\bm G\|_\F^2\right).
\end{equation}
This estimator requires $4r$ non-adaptive products with $\bm A$ and $4r$ non-adaptive products with $\bm A^\T$. Importantly, $\bm{B}\bm{G}$ is computable directly from $\bm{A}\bm{\Omega}$, $\bm{\Psi}^\T\bm{A}$, and $\bm{AG}$.

Our main theoretical result of this section is that $\Hflat$ satisfies the following variance bound.
\begin{theorem}\label{thm:gnpp_variance}
Let $r\geq 16$ and let $[\bm A]_r$ denote the best rank-$r$ approximation to $\bm A$.
The estimator $\Hflat(\bm A)$ is unbiased and its variance satisfies
\begin{align*}
    \Var(\Hflat(\bm A))
    \leq\frac{90}{r^2}\|\bm A\|_\F^2
         \|\bm A-[\bm A]_r\|_\F^2
    \leq\frac{90}{r^2}\|\bm A\|_\F^4.
\end{align*}
\end{theorem}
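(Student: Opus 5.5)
We follow the same template as the proof of \cref{thm:main} and condition on the sketches $(\bm\Omega,\bm\Psi)$ that determine $\bm B$. Since $\bm G$ is independent of $\bm B$, we can rewrite
\[
\Hflat(\bm A) = \tr(\bm B^\T\bm B) + \frac{1}{2r}\tr\left(\bm G^\T(\bm A^\T\bm A-\bm B^\T\bm B)\bm G\right).
\]
We then apply \cref{fact:hutch} with the symmetric matrix $\bm A^\T\bm A-\bm B^\T\bm B$ and $2r$ columns. This gives $\mathbb E[\Hflat\mid\bm\Omega,\bm\Psi]=\|\bm A\|_\F^2$, so the estimator is unbiased. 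The law of total variance then reduces the problem to
\[
\Var(\Hflat(\bm A)) = \frac{1}{r}\,\mathbb E\left\|\bm A^\T\bm A-\bm B^\T\bm B\right\|_\F^2,
\]
because the conditional mean is deterministic. It therefore suffices to show
\[
\mathbb E\|\bm A^\T\bm A-\bm B^\T\bm B\|_\F^2 \le \frac{90}{r}\|\bm A\|_\F^2\|\bm A-[\bm A]_r\|_\F^2 .
\]

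Next we factor the difference. Write $\bm E=\bm A-\bm B$. Then
\[
\bm A^\T\bm A-\bm B^\T\bm B = \bm E^\T\bm A+\bm B^\T\bm E ,
\]
so
\[
\|\bm A^\T\bm A-\bm B^\T\bm B\|_\F \le \|\bm E\|_\F\left(\|\bm A\|_2+\|\bm B\|_2\right).
\]
This bound alone loses too much: it gives $\|\bm A\|_2^2\|\bm E\|_\F^2$ with $\mathbb E\|\bm E\|_\F^2\approx\|\bm A-[\bm A]_r\|_\F^2$, with no $1/r$ factor.

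To recover the factor $1/r$ we use the structure of generalized Nyström. Let $\bm Q$ be an orthonormal basis for $\range(\bm A\bm\Omega)$ and $\bm P=\bm Q\bm Q^\T$. Because $\bm\Psi^\T\bm A\bm\Omega$ has full column rank almost surely (as $4r>2r$), we have
\[
\bm B = \bm Q(\bm\Psi^\T\bm Q)^\dagger\bm\Psi^\T\bm A ,
\]
and hence
\[
\bm A-\bm B = (\bm I-\bm Q(\bm\Psi^\T\bm Q)^\dagger\bm\Psi^\T)(\bm I-\bm P)\bm A .
\]
We intend to exploit the standard splitting
\[
\bm A-\bm B = (\bm I-\bm P)\bm A - \bm Q(\bm\Psi^\T\bm Q)^\dagger\bm\Psi^\T(\bm I-\bm P)\bm A .
\]
Its second term is an oblique correction. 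Conditional on $\bm\Omega$, the sketches $\bm\Psi^\T\bm Q$ and $\bm\Psi^\T(\bm I-\bm P)$ are independent Gaussians. We substitute this decomposition into $\bm E^\T\bm A+\bm B^\T\bm E$ and use $\bm A = \bm P\bm A+(\bm I-\bm P)\bm A$. The expected cross terms should then be controlled by Gaussian moments, giving products of the form $\|\bm A\|_\F^2\,\|(\bm I-\bm P)\bm A\|_\F^2\cdot O(1/r)$. Typical ingredients are $\mathbb E\|(\bm\Psi^\T\bm Q)^\dagger\|_\F^2=\frac{2r}{4r-2r-1}$ and $\mathbb E\|\bm X\bm M\|_\F^2=\|\bm M\|_\F^2$ for Gaussian $\bm X$.

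We also need a version of \cref{fact:frobeniusbound} for the residual, since the terms involving $(\bm I-\bm P)\bm A$ multiplied by itself are the ones controlled by the range-finder error. In particular, we use
\[
(\bm I-\bm P)\bm A\bm A^\T(\bm I-\bm P)
\]
paired against $\bm P\bm A$. The key point is that $\|\bm E^\T\bm A\|_\F$ with $\bm E$ orthogonal to $\range(\bm Q)$ involves only $\|(\bm I-\bm P)\bm A\|_{(4)}$-type quantities, not $\|\bm A\|_2^2$. Finally, we invoke the classical randomized SVD bound with oversampling $2r$ versus rank $r$:
\[
\mathbb E\|(\bm I-\bm P)\bm A\|_\F^2\le\left(1+\tfrac{r}{r-1}\right)\|\bm A-[\bm A]_r\|_\F^2 ,
\]
and its fourth-moment analogue. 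Combining these, with $r\ge16$ used to bound the constants, should yield the factor $90$.

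The main obstacle is extracting the extra $1/r$. A naive bound on $\|\bm E^\T\bm A\|_\F^2$ gives $\|\bm A\|_2^2\|\bm E\|_\F^2$, which has no $1/r$. The plan instead is to write $\bm E^\T\bm A=\bm E^\T(\bm I-\bm P)\bm A+\bm E^\T\bm P\bm A$, observe that $\bm E^\T\bm P\bm A$ is the oblique term, which carries a Gaussian factor $(\bm\Psi^\T\bm Q)^\dagger\bm\Psi^\T(\bm I-\bm P)$ whose second moment scales like $1/r$, and bound $\|(\bm I-\bm P)\bm A\|_{(4)}^4\le\|(\bm I-\bm P)\bm A\|_\F^2\|\bm A\|_2^2$. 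I expect the delicate part to be getting a genuine $1/r$ on the $\bm E^\T(\bm I-\bm P)\bm A$ term. If that fails, I would instead use a projection cost-preserving sketch argument: bound $\mathbb E\|\bm A^\T(\bm I-\bm P)\bm A\|_\F^2$ directly via the independence of $\bm\Omega$'s components along the top-$r$ and tail singular subspaces of $\bm A$.
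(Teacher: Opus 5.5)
\textbf{Verdict: genuine gap.} Your reduction is correct and is the paper's. Conditioning on $(\bm\Omega,\bm\Psi)$ and applying \cref{fact:hutch} with $2r$ columns gives unbiasedness and $\Var(\Hflat(\bm A))=\frac1r\mathbb E\|\bm A^\T\bm A-\bm B^\T\bm B\|_\F^2$. Your splitting is also the paper's. Set $\bm F:=\bm Q(\bm\Psi^\T\bm Q)^\dagger\bm\Psi^\T(\bm I-\bm P)\bm A=\bm B-\bm P\bm A$ (the paper's $\bm E$). Substituting into $\bm E^\T\bm A+\bm B^\T\bm E$ gives exactly the four terms $\bm A^\T(\bm I-\bm P)\bm A-\bm F^\T\bm P\bm A-\bm A^\T\bm P\bm F-\bm F^\T\bm F$.

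The substance of the theorem, however, is bounding these four terms (\cref{lemma:gn_gram}), and that is where your plan breaks. Since $(\bm I-\bm P)\bm B=\bm 0$, the term you call delicate is exactly $\bm E^\T(\bm I-\bm P)\bm A=\bm A^\T(\bm I-\bm P)\bm A$, whose squared Frobenius norm is $\|(\bm I-\bm P)\bm A\|_{(4)}^4$. Your bound $\|(\bm I-\bm P)\bm A\|_{(4)}^4\le\|\bm A\|_2^2\|(\bm I-\bm P)\bm A\|_\F^2$ cannot yield the theorem. Because $\bm P$ has rank $2r$, we have $\mathbb E\|(\bm I-\bm P)\bm A\|_\F^2\ge\|\bm A-[\bm A]_{2r}\|_\F^2$. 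Take $\bm A$ with $\sigma_1=1$ followed by $N\gg r$ singular values equal to $\delta$, where $N\delta^2\le 1$. Then your bound is at least of order $N\delta^2$, while the target $\frac{90}{r}\|\bm A\|_\F^2\|\bm A-[\bm A]_r\|_\F^2$ is of order $N\delta^2/r$. You lose exactly the factor $r$ that gives the $1/r^2$ rate.

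\textbf{The missing ingredient} is a Schatten-4 error bound for the randomized range finder, e.g.\ \cite[Thm.~8.7]{tropp2023randomized}. For $r\ge16$ it gives $\mathbb E\|(\bm I-\bm P)\bm A\|_{(4)}^4\le\frac{16}{3}\bigl(\|\bm A-[\bm A]_r\|_{(4)}^2+r^{-1/2}\|\bm A-[\bm A]_r\|_\F^2\bigr)^2$. It must be paired with the conversion $\|\bm A-[\bm A]_r\|_{(4)}^4\le\sigma_{r+1}^2\|\bm A-[\bm A]_r\|_\F^2\le\frac1r\|[\bm A]_r\|_\F^2\|\bm A-[\bm A]_r\|_\F^2$. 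Together with $(a+b)^2\le 2(a^2+b^2)$, this turns the squared bracket into at most $\frac2r\|\bm A\|_\F^2\|\bm A-[\bm A]_r\|_\F^2$. Your fallback (splitting $\bm\Omega$ along the top-$r$ and tail singular subspaces) is the standard way to prove such a bound, but it is only named, not carried out. Also, the ``fourth-moment analogue'' you cite has to be this Schatten-4 statement, not a bound on $\mathbb E\|(\bm I-\bm P)\bm A\|_\F^4$.

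Two further pieces are absent:
\begin{enumerate}
\item You never treat the quadratic term $\bm F^\T\bm F$. Write $\bm\Psi_1=\bm\Psi^\T\bm Q$. Its expectation conditional on $\bm\Omega$ and $\bm\Psi_1$ needs the Schatten-4 Gaussian identity, and averaging over $\bm\Psi_1$ then needs the pseudoinverse moments $\mathbb E\|\bm\Psi_1^\dagger\|_{(4)}^4$ and $\mathbb E\|\bm\Psi_1^\dagger\|_\F^4$. Its leading part is a constant times $\|(\bm I-\bm P)\bm A\|_{(4)}^4$, so it hinges on the same missing bound.
\item For the cross term, the ingredient you list, $\mathbb E\|\bm\Psi_1^\dagger\|_\F^2=\frac{2r}{2r-1}$, gives no $1/r$. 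What you need is $\mathbb E\|\bm C\bm\Psi_1^\dagger\|_\F^2=\frac{1}{2r-1}\|\bm C\|_\F^2$ (equivalently $\mathbb E(\bm\Psi_1^\T\bm\Psi_1)^{-1}=\frac{1}{2r-1}\bm I$) with $\bm C=\bm A^\T\bm Q$, followed by $\|\bm Q^\T\bm A\|_\F\le\|\bm A\|_\F$ and the Frobenius range-finder bound.
\end{enumerate}
Without these, the constant $90$ is asserted rather than derived.
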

This bound matches \Cref{thm:main} up to constants (and we did not attempt to optimize the constant here). In particular, setting $r = O(1/\varepsilon)$, we obtain variance $\leq \varepsilon^2 \|\bm{A}\|_\F^4$, so can obtain a relative error approximation to the Frobenius norm with high probability. However, the bound can be much tighter than \Cref{thm:main} 
when $\bm{A}$ has spectral decay, and thus $\|\bm A-[\bm A]_r\|_\F^2 \ll \|\bm{A}\|_\F^2$. Indeed, as will be shown in \Cref{sec:experiments}, $\Hflat(\bm A)$ often outperforms Hutch\# for this reason.

\subsection{Analysis}
\label{sec:gnpp_analysis}
As in our analysis of Hutch$\sharp$, to prove \Cref{thm:gnpp_variance}, we require a bound on the approximation error $\|\bm A^\T\bm A-\bm B^\T\bm B\|_\F^2$, where $\bm{B}$ is the generalized Nystr\"om low-rank approximation from \cref{eq:gn_approximation}. We prove the following bound in \Cref{app:gn_gram} using relatively standard tools from the randomized numerical linear algebra literature:
\begin{restatable}{lemma}{GNGramLemma}\label{lemma:gn_gram}
Let $\bm B$ be as in \cref{eq:gn_approximation}.
For $r\geq16$,
\begin{equation}\label{eq:gnpp_gram_bound}
    \mathbb E\|\bm A^\T\bm A-\bm B^\T\bm B\|_\F^2
    \leq 35\left(\|\bm A-[\bm A]_r\|_{(4)}^2
    +\frac{\|\bm A-[\bm A]_r\|_\F^2}{\sqrt r}\right)^2 +\frac{20}{r}\|\bm A\|_\F^2\|\bm A-[\bm A]_r\|_\F^2.
\end{equation}
\end{restatable}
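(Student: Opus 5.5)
The plan is to split $\bm B$ into an exact orthogonal projection of $\bm A$ plus an oblique correction, and then bound each resulting piece of $\bm A^\T\bm A-\bm B^\T\bm B$ separately. Everything rests on Gaussian rotational invariance and standard RSVD and Gaussian-pseudoinverse moment bounds.

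\emph{Step 1: algebraic decomposition.} Let $\bm Q$ be an orthonormal basis for $\range(\bm A\bm\Omega)$, and write $\bm R:=(\bm I-\bm Q\bm Q^\T)\bm A$. Set $\bm P:=\bm A\bm\Omega(\bm\Psi^\T\bm A\bm\Omega)^\dagger\bm\Psi^\T$. Almost surely $\bm\Psi^\T\bm Q$ has full column rank, so $\bm P=\bm Q(\bm\Psi^\T\bm Q)^\dagger\bm\Psi^\T$ is an oblique projector onto $\range(\bm Q)$, and $\bm P\bm Q\bm Q^\T=\bm Q\bm Q^\T$. Hence
\begin{equation*}
\bm B=\bm P\bm A=\bm C+\bm D,\qquad \bm C:=\bm Q\bm Q^\T\bm A,\quad \bm D:=\bm Q(\bm\Psi^\T\bm Q)^\dagger\bm\Psi^\T\bm R .
\end{equation*}
Since $\bm C^\T\bm R=0$, we have $\bm A^\T\bm A-\bm C^\T\bm C=\bm R^\T\bm R$. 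Therefore
\begin{equation*}
\bm A^\T\bm A-\bm B^\T\bm B=\bm R^\T\bm R-\bm D^\T\bm D-(\bm C^\T\bm D+\bm D^\T\bm C).
\end{equation*}
Applying $(a+b+c)^2\le$ a weighted sum of squares, it suffices to bound $\mathbb E\|\bm R^\T\bm R\|_\F^2$, $\mathbb E\|\bm D^\T\bm D\|_\F^2$, and $\mathbb E\|\bm C^\T\bm D\|_\F^2$.

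\emph{Step 2: the sketch residual $\bm R$.} We have $\|\bm R^\T\bm R\|_\F^2=\|\bm R\|_{(4)}^4$. With $2r$ sketch columns and target rank $r$, the Halko--Martinsson--Tropp style Schatten-$p$ bounds for the randomized range finder give
\begin{equation*}
\mathbb E\|\bm R\|_{(4)}^4\lesssim\left(\|\bm A-[\bm A]_r\|_{(4)}^2+\frac{\|\bm A-[\bm A]_r\|_\F^2}{\sqrt r}\right)^2
\end{equation*}
and $\mathbb E\|\bm R\|_\F^2\le 2\|\bm A-[\bm A]_r\|_\F^2$. I will use these as black boxes, rederiving the fourth-moment version from the usual deterministic bound together with Gaussian moment computations.

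\emph{Step 3: decoupling via rotational invariance.} Condition on $\bm\Omega$, which fixes $\bm Q$ and $\bm R$. The columns of $\bm R$ lie in $\range(\bm Q)^\perp$, so $\bm\Psi^\T\bm R=\bm\Psi^\T\bm Q_\perp\bm Q_\perp^\T\bm R$. Here $\bm\Psi^\T\bm Q$ and $\bm G:=\bm\Psi^\T\bm Q_\perp$ are independent Gaussian matrices, of sizes $4r\times 2r$ and $4r\times(m-2r)$.
\begin{itemize}[leftmargin=1.5em]
\item[(a)] For the cross term, conditioning further on $\bm\Psi^\T\bm Q$ and writing $\bm M:=\bm A^\T\bm Q(\bm\Psi^\T\bm Q)^\dagger$ gives
\begin{equation*}
\mathbb E\|\bm C^\T\bm D\|_\F^2=\mathbb E\|\bm M\bm G\bm Q_\perp^\T\bm R\|_\F^2=\mathbb E\|\bm M\|_\F^2\|\bm R\|_\F^2\le\|\bm A\|_\F^2\,\mathbb E\|(\bm\Psi^\T\bm Q)^\dagger\|_2^2\;\mathbb E\|\bm R\|_\F^2 .
\end{equation*}
Since $\mathbb E\|(\bm\Psi^\T\bm Q)^\dagger\|_2^2=O(1/r)$ for a $4r\times 2r$ Gaussian, this produces the $\frac{20}{r}\|\bm A\|_\F^2\|\bm A-[\bm A]_r\|_\F^2$ term.
\item[(b)] For the $\bm D^\T\bm D$ term, use
\begin{equation*}
\|\bm D^\T\bm D\|_\F\le\|(\bm\Psi^\T\bm Q)^\dagger\|_2^2\,\|\bm G\bm Q_\perp^\T\bm R\|_{(4)}^2 .
\end{equation*}
Fact~\ref{fact:frobeniusbound}, rescaled, gives
\begin{equation*}
\mathbb E\|\bm R^\T\bm G^\T\bm G\bm R\|_\F^2=16r^2\|\bm R\|_{(4)}^4+4r\bigl(\|\bm R\|_{(4)}^4+\|\bm R\|_\F^4\bigr).
\end{equation*}
Combined with $\mathbb E\|(\bm\Psi^\T\bm Q)^\dagger\|_2^4=O(1/r^2)$, this yields $\mathbb E\|\bm D^\T\bm D\|_\F^2\lesssim\|\bm R\|_{(4)}^4+\|\bm R\|_\F^4/r$. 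After averaging over $\bm\Omega$ and applying Step 2, this is absorbed into the first term of \cref{eq:gnpp_gram_bound}.
\end{itemize}

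\emph{Main obstacle.} I expect the hard part to be the negative moments of the Gaussian pseudoinverse, specifically $\mathbb E\|(\bm\Psi^\T\bm Q)^\dagger\|_2^4$, together with the fourth-moment version of the range-finder bound, i.e.\ $\mathbb E\|\bm R\|_F^4$ rather than merely its second moment. Both require inverse-Wishart or smallest-singular-value tail estimates, and these are where the assumption $r\ge16$ and the oversampling ratios $(2r,4r)$ enter. My first approach is to bound $\|(\bm\Psi^\T\bm Q)^\dagger\|_2$ via a Davidson--Szarek tail bound for $\sigma_{\min}$ of a $4r\times 2r$ Gaussian, integrated to get the fourth moment. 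For $\bm R$, I would use the deterministic bound $\|\bm R\|\le\|\bm\Sigma_2\|+\|\bm\Sigma_2\bm\Omega_2\bm\Omega_1^\dagger\|$ and take moments of $\bm\Omega_1^\dagger$ for an $r\times 2r$ Gaussian. Finally, I would track constants crudely enough to land at $35$ and $20$.
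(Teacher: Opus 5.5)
Your decomposition is the paper's. Your $\bm D$ equals the paper's $\bm E=\bm B-\bm Q\bm Q^\T\bm A$, and your three pieces are $\bm A^\T(\bm I-\bm Q\bm Q^\T)\bm A$, $\bm A^\T\bm Q\bm Q^\T\bm E$ and $\bm E^\T\bm E$. Conditioning on $\bm\Omega$ and splitting into the independent $\bm\Psi_1:=\bm\Psi^\T\bm Q$ and $\bm\Psi_2:=\bm\Psi^\T\bm Q_\perp$ is also the same.

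\textbf{Where it fails.} The gap is in Step~3, where you pull $\bm\Psi_1^\dagger$ out in operator norm. This gives the right \emph{shape}, i.e.\ the lemma up to unspecified absolute constants, but it cannot give $35$ and $20$. So your last sentence ("track constants crudely enough to land at $35$ and $20$") will not go through. Write $\Delta:=\|\bm A-[\bm A]_r\|_{(4)}^2+\|\bm A-[\bm A]_r\|_\F^2/\sqrt r$.

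\emph{Cross term (a).}
\begin{itemize}
\item By the Marchenko--Pastur edge for a $4r\times 2r$ Gaussian, $r\,\mathbb E\|\bm\Psi_1^\dagger\|_2^2\to(2-\sqrt2)^{-2}\approx 2.91$.
\item Since $\|\bm C^\T\bm D+\bm D^\T\bm C\|_\F\le 2\|\bm C^\T\bm D\|_\F$, the cross term gets weight at least $4$ in any weighted split.
\item $\mathbb E\|\bm R\|_\F^2$ can be essentially $2\|\bm A-[\bm A]_r\|_\F^2$. Take top singular values $(L,1,\dots,1)$ followed by a long flat tail of small total energy.
\end{itemize}
Together, your chain puts at least about $4\cdot 2.91\cdot 2\approx 23$ in front of $\frac1r\|\bm A\|_\F^2\|\bm A-[\bm A]_r\|_\F^2$ for large $r$. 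Sending $L=\sigma_1(\bm A)\to\infty$ leaves $35\Delta^2$ unchanged, so nothing absorbs the excess over $20$.

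\emph{Quadratic term (b).} The loss here is far larger.
\begin{itemize}
\item In front of $\|\bm R\|_{(4)}^4$ your bound puts $\mathbb E\|\bm\Psi_1^\dagger\|_2^4\,(16r^2+4r)\approx 16(2-\sqrt2)^{-4}\approx 136$. The exact coefficient is $\frac{4r^2+1}{(2r-1)(2r-3)}\le\frac87$.
\item In front of $\|\bm R\|_\F^4$ it puts about $34/r$, versus the exact $\frac{4r-1}{(2r-1)(2r-3)}\approx\frac1r$.
\end{itemize}
With the Schatten-4 range-finder bound (constant $\frac{16}{3}$), your constant in front of $\Delta^2$ lands in the hundreds. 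The reason: $\bm\Psi_2\bm Q_\perp^\T\bm A$ has a left-rotation-invariant distribution independent of $\bm\Psi_1$. So $\bm\Psi_1^\dagger$ acts on it with its average squared gain $\frac{1}{4r}\mathbb E\|\bm\Psi_1^\dagger\|_\F^2\approx\frac{1}{4r}$, not its worst-case gain $\approx\frac{2.9}{r}$.

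\textbf{The missing idea.} Keep $\bm\Psi_1^\dagger$ inside the Frobenius/Schatten-4 norms and use exact Gaussian and inverse-Wishart identities, as the paper does.
\begin{itemize}
\item For (a), $\mathbb E[(\bm\Psi_1^\T\bm\Psi_1)^{-1}]=\frac{1}{2r-1}\bm I$ gives $\mathbb E\|\bm A^\T\bm Q\bm\Psi_1^\dagger\|_\F^2=\frac{1}{2r-1}\|\bm Q^\T\bm A\|_\F^2$.
\item For (b), conditionally on $\bm\Omega,\bm\Psi_1$, use
\[
\mathbb E\|\bm X\bm\Psi_2\bm Y\|_{(4)}^4=\|\bm X\|_{(4)}^4\|\bm Y\|_{(4)}^4+\|\bm X\|_\F^4\|\bm Y\|_{(4)}^4+\|\bm X\|_{(4)}^4\|\bm Y\|_\F^4,\qquad \bm X=\bm\Psi_1^\dagger,\ \bm Y=\bm Q_\perp^\T\bm A,
\]
together with $\mathbb E\|\bm\Psi_1^\dagger\|_{(4)}^4=\frac{4r-1}{(2r-1)(2r-3)}$ and $\mathbb E\|\bm\Psi_1^\dagger\|_\F^4=\frac{4r^2-4r+2}{(2r-1)(2r-3)}$.
\end{itemize}

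This also removes both obstacles you anticipated. No $\sigma_{\min}$ tail bounds are needed. No fourth moment of the range-finder error is needed either: $\|\bm Q_\perp^\T\bm A\|_\F^4\le\|\bm A\|_\F^2\|\bm Q_\perp^\T\bm A\|_\F^2$ reduces that term to the second-moment bound $\mathbb E\|\bm R\|_\F^2\le(1+\frac{r}{r-1})\|\bm A-[\bm A]_r\|_\F^2$. That bound is slightly more than your $2$. With the split $3/12/3$, these exact constants give $240/7\le 35$ and $791/40\le 20$.
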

This lemma suffices to prove the main result.
\begin{proof}[Proof of \Cref{thm:gnpp_variance}]
By \Cref{fact:hutch} and the independence of $\bm G$ and $\bm B$ we immediately have that $
\mathbb E[\Hflat(\bm A)\mid\bm B]=\|\bm A\|_\F^2$. I.e., the Hutch$\flat$  estimator is unbiased as claimed.

The law of total variance and \Cref{lemma:gn_gram} then give
\begin{align}
    \label{eq:gn_almost_there}
    \Var(\Hflat(\bm A))
    &=\frac1r\,\mathbb E\|\bm A^\T\bm A-\bm B^\T\bm B\|_\F^2\\
    &\leq\frac{35}{r}\left(\|\bm A-[\bm A]_r\|_{(4)}^2
    +\frac{\|\bm A-[\bm A]_r\|_\F^2}{\sqrt r}\right)^2 +\frac{20}{r^2}\|\bm A\|_\F^2\|\bm A-[\bm A]_r\|_\F^2.
\end{align}
Since
$r\sigma_{r+1}(\bm A)^2\leq\|[\bm A]_r\|_\F^2$, we have
\begin{align*}
\|\bm A-[\bm A]_r\|_{(4)}^4
&\leq\frac{\|[\bm A]_r\|_\F^2\|\bm A-[\bm A]_r\|_\F^2}{r}.
\end{align*}
Combined with $(a+b)^2\leq2(a^2+b^2)$ and orthogonality of the
truncated SVD, we obtain:
\begin{align*}
\left(\|\bm A-[\bm A]_r\|_{(4)}^2+\frac{\|\bm A-[\bm A]_r\|_\F^2}{\sqrt r}\right)^2
&\leq\frac{2}{r}\|\bm A-[\bm A]_r\|_\F^2
\left(\|[\bm A]_r\|_\F^2+\|\bm A-[\bm A]_r\|_\F^2\right)\\
&=\frac{2}{r}\|\bm A\|_\F^2\|\bm A-[\bm A]_r\|_\F^2.
\end{align*}
Plugging into \cref{eq:gn_almost_there}, we obtain:
\begin{align*}
\Var(\Hflat(\bm A)) \leq (70+20)\frac{\|\bm A\|_\F^2\|\bm A-[\bm A]_r\|_\F^2}{r^2}
&=\frac{90}{r^2}\|\bm A\|_\F^2\|\bm A-[\bm A]_r\|_\F^2.\qedhere
\end{align*}
\end{proof}

\section{Numerical experiments}
\label{sec:experiments}
In this section, we provide an initial numerical comparison of the performance of our new Hutch\# estimator and its variants to existing implicit trace estimators.
\subsection{Hutch\# and Weighted Hutch\#}
We start by comparing our vanilla Hutch\# and its weighted variant from \Cref{sec:weighted} to the standard Girard--Hutchinson estimator, which, to the best of our knowledge, is the only existing implicit Frobenius norm estimator that only uses non-adaptive matrix-vector products. 

\begin{figure}[b!]
\centering

\setlength{\tabcolsep}{14pt}
\renewcommand{\arraystretch}{1.0}

\begin{tabular}{cc}

\begin{overpic}[width=0.44\textwidth]
    {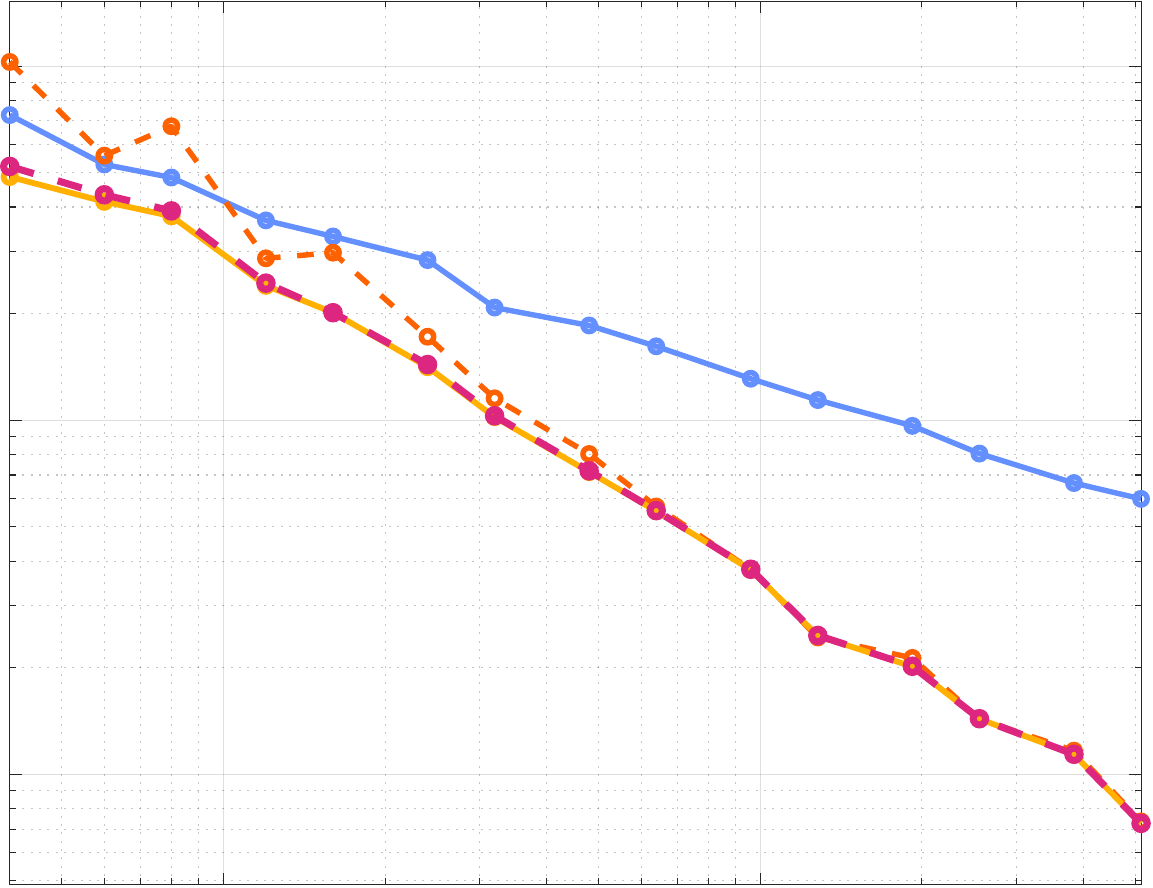}

    \put(83,70){\fbox{\small $c=2$}}

    \put(-7,70){\small $10^{0}$}
    \put(-10,39){\small $10^{-1}$}
    \put(-10,8){\small $10^{-2}$}

    \put(17,-4){\small $10^{1}$}
    \put(64,-4){\small $10^{2}$}

\end{overpic}
&
\begin{overpic}[width=0.44\textwidth]
    {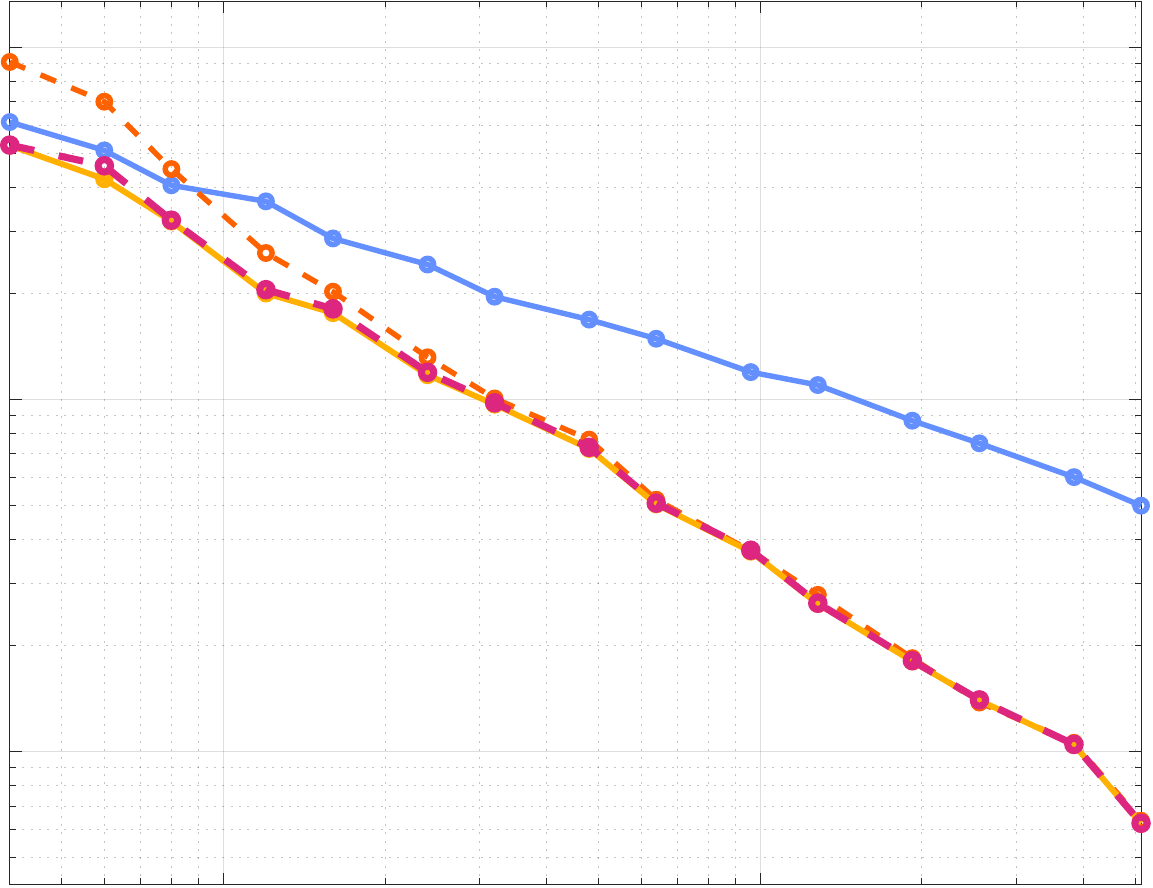}

    \put(79,70){\fbox{\small $c=1.5$}}

    \put(-7,71){\small $10^{0}$}
    \put(-10,40){\small $10^{-1}$}
    \put(-10,9){\small $10^{-2}$}

    \put(17,-4){\small $10^{1}$}
    \put(64,-4){\small $10^{2}$}

\end{overpic}

\\[1.2em]

\begin{overpic}[width=0.44\textwidth]
    {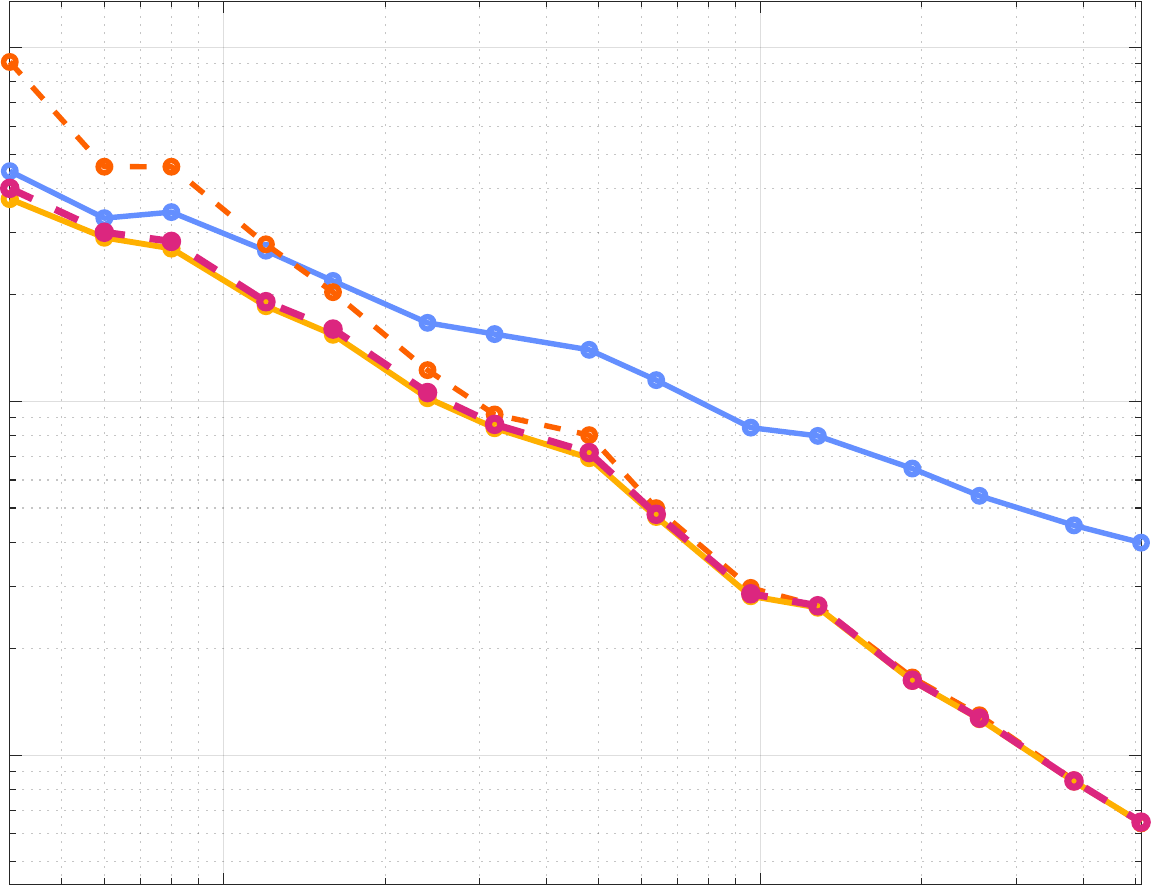}

    \put(83,70){\fbox{\small $c=1$}}

    \put(-7,71){\small $10^{0}$}
    \put(-10,40){\small $10^{-1}$}
    \put(-10,9){\small $10^{-2}$}

    \put(17,-4){\small $10^{1}$}
    \put(64,-4){\small $10^{2}$}

\end{overpic}
&
\begin{overpic}[width=0.44\textwidth]
    {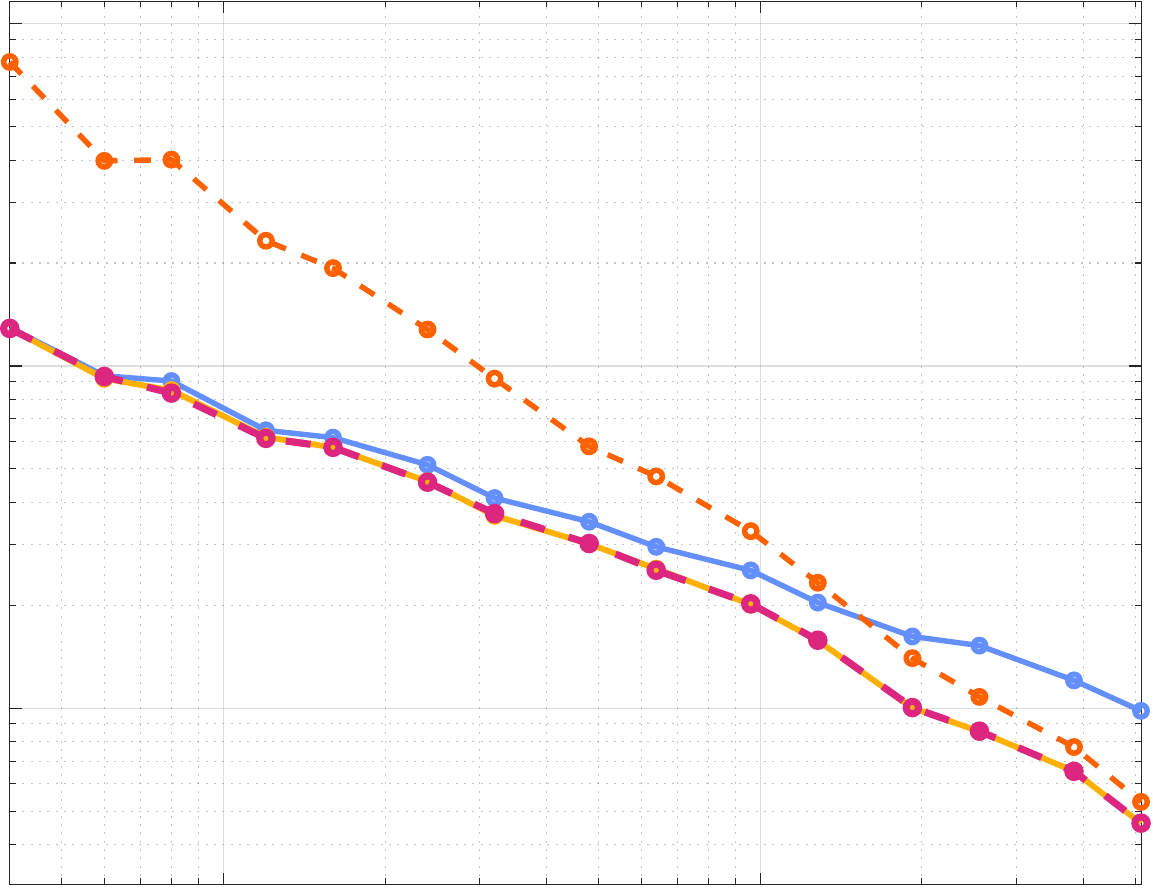}

    \put(79,70){\fbox{\small $c=0.5$}}

    \put(-7,73){\small $10^{0}$}
    \put(-10,43){\small $10^{-1}$}
    \put(-10,13){\small $10^{-2}$}

    \put(17,-4){\small $10^{1}$}
    \put(64,-4){\small $10^{2}$}

\end{overpic}

\end{tabular}

\vspace{0.5em}

{\small total matvecs}


\vspace{0.7em}

\begin{tikzpicture}[baseline]

\draw[
    ibmblue,
    line width=1.3pt
]
(0.25,0) -- (0.7,0);
\draw[
    ibmblue,
    line width=1pt,
    fill=ibmblue
]
(0.475,0) circle (1.5pt);
\node[anchor=west] at (0.71,0)
    {\small Hutchinson};

\draw[
    ibmorange,
    line width=1.7pt,
    dashed
]
(3.1,0) -- (3.6,0);

\node[anchor=west] at (3.61,0)
    {\small Hutch\#};

\draw[
    ibmgold,
    line width=1.5pt
]
(5.55,0) -- (6,0);
\filldraw[
    ibmgold
]
(5.775,0) circle (1.5pt);
\node[anchor=west] at (6.01,0)
    {\small Hutch\# (Optimally Weighted)};

\draw[
    ibmmagenta,
    line width=1.7pt,
    dashed
]
(11.4,0) -- (11.9,0);
\node[anchor=west] at (11.91,0)
    {\small Hutch\# (Weighted)};

\end{tikzpicture}

\caption{
Relative RMSE for Frobenius norm estimation on matrices with singular values
$\sigma_j(A)\propto j^{-c}$. These plots confirm the improved $1/r^2$ scaling of the variance of Hutch\# compared to the $1/r$ scaling of Hutchinson's, and show that our weighted Hutch\# estimator from \Cref{sec:weighted} uniformly outperforms both estimators for any given number of matvecs.
}
\label{fig:weighted_decay}

\end{figure}

We test all three methods on four matrices with differing rates of spectral decay. In particular, each matrix is chosen to be a $1000 \times 1000$ matrix with $j$th singular value $\sigma_j = j^{-c}$. We test $c$ values in $\{0.5, 1, 1.5, 2\}$. All methods are implemented using Gaussian random vectors, so, by rotational invariance, have the same error distribution on any matrix with the same singular values. It thus suffices to use diagonal matrices in our experiments. 

Results are shown in \Cref{fig:weighted_decay}, which shows the normalized root mean squared error (RMSE) of each method over 200 trials, plotted against the number of matrix-vector products used. In the log-log plot, the improved $1/r^2$ scaling of the variance of Hutch\# is evident in comparison to the $1/r$ scaling of Hutchinson's. The method significantly outperforms Hutchinson's for matrices with spectral decay. However, as discussed in \Cref{sec:weighted}, Hutchinson's can perform better for matrices with flatter spectra. This is most evident in the plot for $c=0.5$. 

\begin{figure*}[b!]
\centering

\setlength{\tabcolsep}{14pt}
\renewcommand{\arraystretch}{1.0}

\begin{tabular}{cc}

\begin{overpic}[width=0.44\textwidth]
    {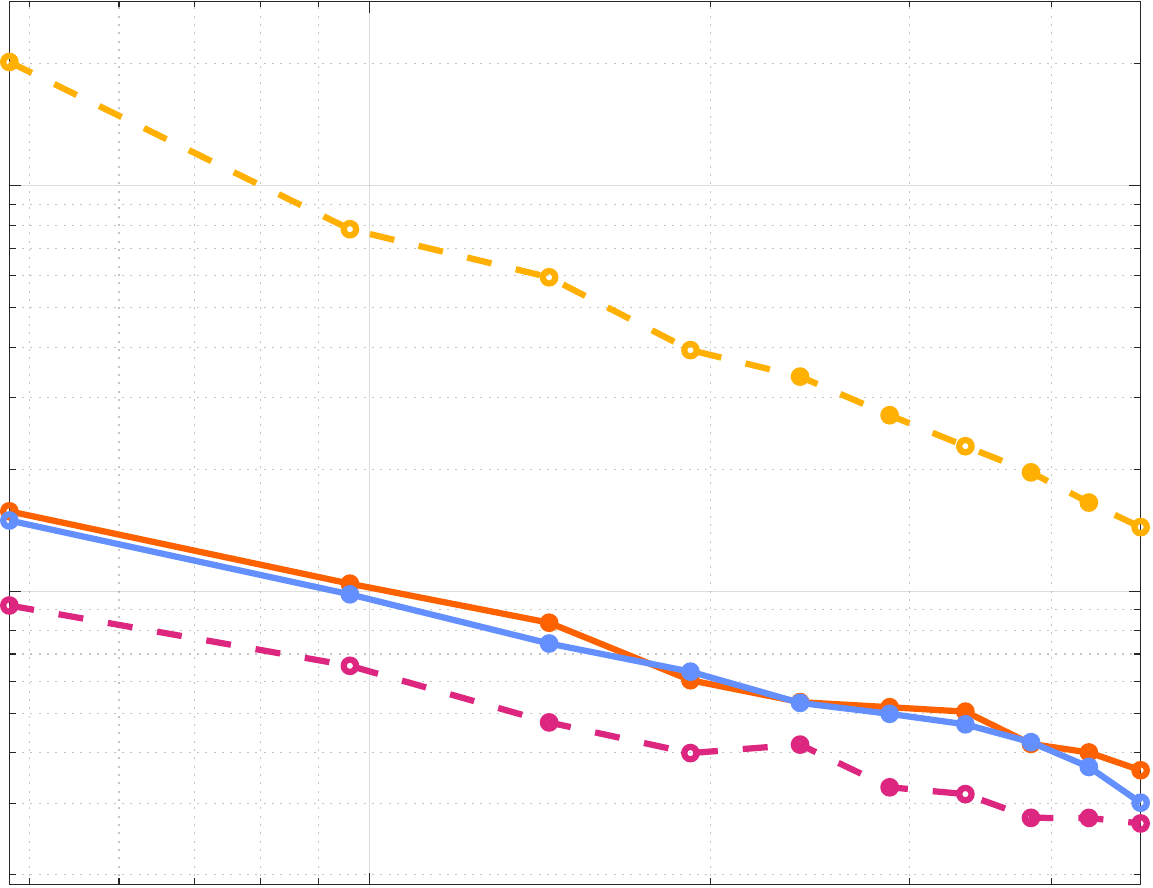}

    \put(75.75,70.75){\fbox{\small $c=0.25$}}

    \put(-10,59){\small $10^{-1}$}
    \put(-10,23){\small $10^{-2}$}

    \put(29,-4.5){\small $10^{2}$}

\end{overpic}
&
\begin{overpic}[width=0.44\textwidth]
    {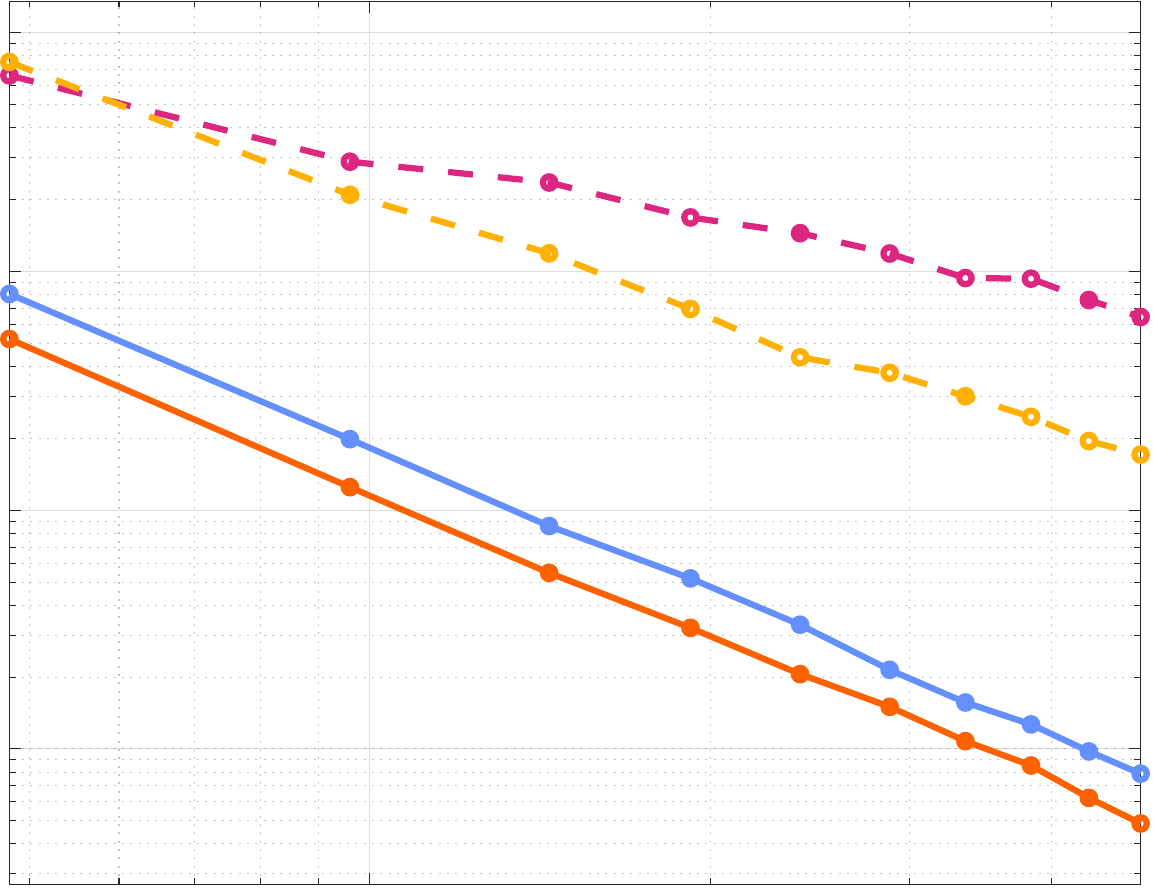}

    \put(82.5,70.5){\fbox{\small $c=1$}}

    \put(-10,73){\small $10^{-1}$}
    \put(-10,51){\small $10^{-2}$}
    \put(-10,31){\small $10^{-3}$}
    \put(-10,10){\small $10^{-4}$}

    \put(29,-4.5){\small $10^{2}$}

\end{overpic}

\\[1.2em]

\begin{overpic}[width=0.44\textwidth]
    {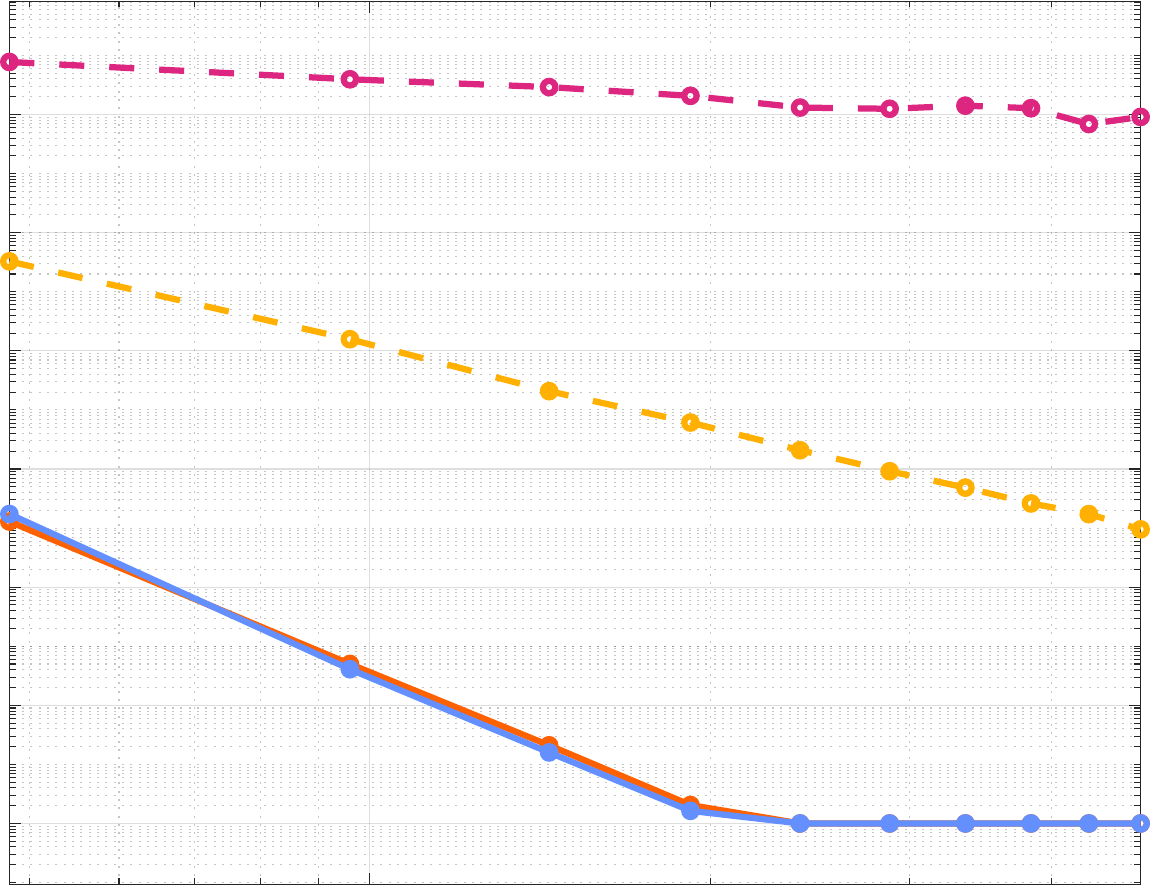}

    \put(82,70.75){\fbox{\small $c=4$}}

    \put(-10,65){\small $10^{-2}$}
    \put(-10,55){\small $10^{-4}$}
    \put(-10,45){\small $10^{-6}$}
    \put(-10,34){\small $10^{-8}$}
    \put(-12,24){\small $10^{-10}$}
    \put(-12,14){\small $10^{-12}$}
    \put(-12,3){\small $10^{-14}$}
    \put(29,-4.5){\small $10^{2}$}

\end{overpic}
&
\begin{overpic}[width=0.44\textwidth]
    {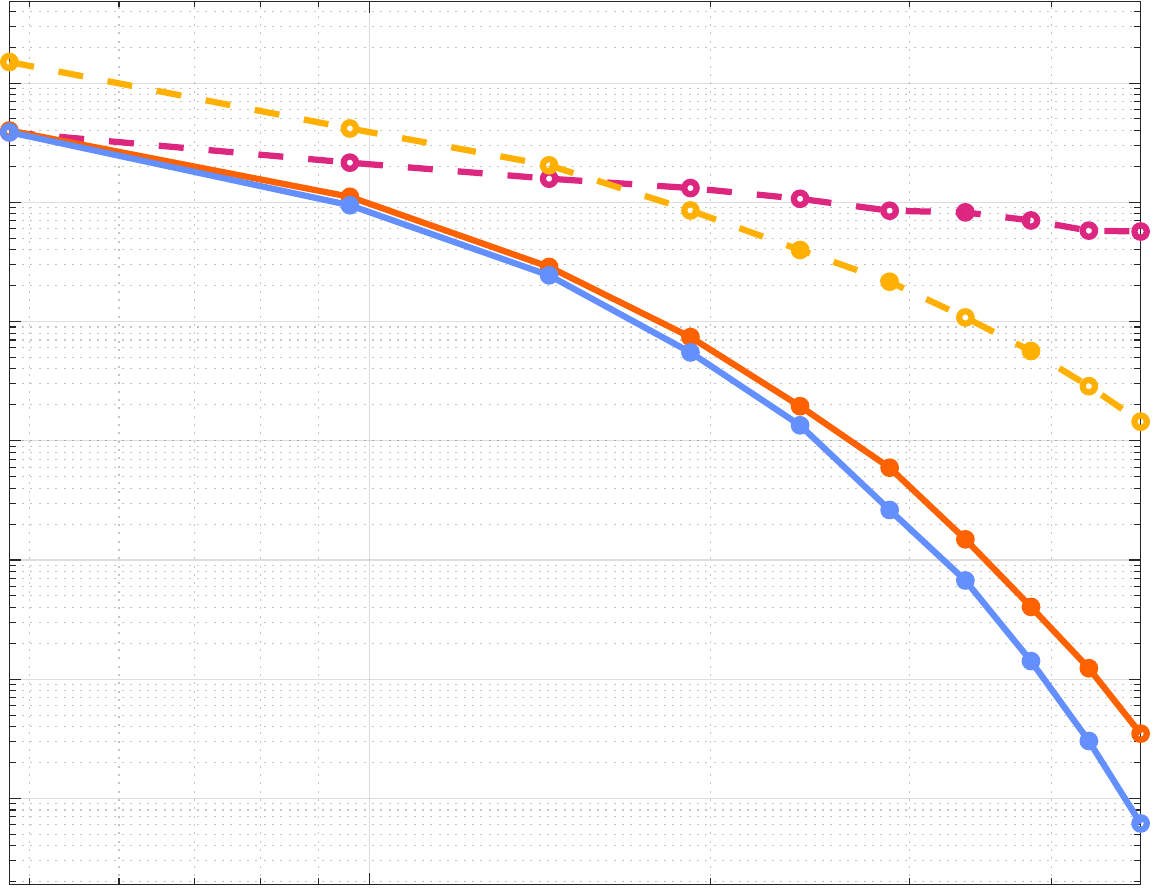}

    \put(58.5,70){\fbox{\small $\sigma_j=e^{-0.05(j-1)}$}}

    \put(-10,68){\small $10^{-1}$}
    \put(-10,57){\small $10^{-2}$}
    \put(-10,47){\small $10^{-3}$}
    \put(-10,37){\small $10^{-4}$}
    \put(-10,26){\small $10^{-5}$}
    \put(-10,16){\small $10^{-6}$}
    \put(-10,5.5){\small $10^{-7}$}

    \put(29,-4.5){\small $10^{2}$}

\end{overpic}

\end{tabular}


\vspace{0.35em}

{\small total matvecs}


\vspace{0.7em}

\begin{tikzpicture}[baseline]

\draw[
    ibmorange,
    line width=1.3pt
]
(0,0) -- (0.45,0);

\draw[
    ibmorange,
    line width=1pt,
    fill=ibmorange
]
(0.225,0) circle (1.5pt);

\node[anchor=west] at (0.44,0)
    {\small Hutch++};

\draw[
    ibmblue,
    line width=1.3pt
]
(2.4,0) -- (2.85,0);

\draw[
    ibmblue,
    line width=1pt,
    fill=matlabblue
]
(2.625,0) circle (1.5pt);

\node[anchor=west] at (2.89,0)
    {\small Nystr\"om++};

\draw[
    ibmmagenta,
    line width=1.7pt,
    dashed
]
(5.45,0) -- (5.95,0);

\node[anchor=west] at (5.99,0)
    {\small Hutch\# (weighted)};

\draw[
    ibmgold,
    line width=1.7pt,
    dashed
]
(9.45,0) -- (9.95,0);

\node[anchor=west] at (9.99,0)
    {\small Hutch$\flat$};

\end{tikzpicture}

\caption{
Relative RMSE for Frobenius norm estimation on matrices with polynomially decaying singular values
$\sigma_j(\bm A)\propto j^{-c}$ for $c\in\{0.25,1,4\}$, and for an
exponentially decaying spectrum $\sigma_j(\bm A)=e^{-0.05(j-1)}$.
These plots confirm that, as expected from our theoretical guarantees, Hutch$\flat$ can outperform Hutch\# on matrices with sufficient spectral decay. They also confirm the advantage of adaptivity: the adaptive Hutch++ and Nystr\"{o}m++ methods typically offer the lowest error for a given number of matrix-vector products.
}
\label{fig:hutchflat_mixed_spectra}

\end{figure*}

Fortunately, our suggested fix for this issue works perfectly. The weighted Hutch\# estimator from \Cref{sec:weighted} uniformly outperforms both Hutch\# and Hutchinson's for every matrix and every target number of matvecs we tried. We implemented the weighted estimator using the ``sketch reuse'' strategy discussed in \Cref{sec:weighted}, i.e., the same random Gaussian vectors are used to estimate the mixing parameter, $\alpha$, as are used to construct the Hutch\# estimate. We compared this strategy to an ``oracle'' version of the weighted estimator, where we explicitly compute the optimal mixing parameter $\alpha^*$. In all cases, the performance difference between the implemented estimator and the optimal estimator was negligible, suggesting that the sketch reuse strategy effectively finds a near-optimal mixing parameter.

\subsection{Hutch$\flat$ and Comparison to Hutch++}
Having established that the weighted Hutch\# estimator is the best ``simple'' non-adaptive Frobenius norm estimator, we compare the method to the more involved Hutch$\flat$ estimator from \Cref{sec:gennystrompp_variance}. We also compare both methods to state-of-the-art \emph{adaptive matvec} methods, including Hutch++ \cite{hutchpp} and Nystr\"om++ \cite{AHutchpp}, a variant of Hutch++ that uses the Nystr\"om method for low-rank approximation. Both methods achieve the same $O(1/\varepsilon)$ matvec complexity for $(1\pm \varepsilon)$ relative error Frobenius norm estimation as Hutch\#.

Importantly, both Hutch++ and Nystr\"om++ are designed for the more general problem of trace estimation. For Frobenius norm estimation, they are applied to the   matrix $\bm{A}^\T\bm{A}$. We note that doing so in a completely naive way can waste matvecs. In particular, a direct instantiation of the pseudocode in \cite{hutchpp} requires drawing two random matrices $\bm{\Omega}, \bm{G} \sim \gaussian(n,r)$ and then performing $3r$ matvecs with $\bm{A}^\T\bm{A}$, for a total of $6r$ matvecs. We instead implement Hutch++ using the mathematically equivalent formula $\|\bm A\bm Q\|_\F^2 + \frac{1}{r} \|\bm A (\bm{I} - \bm Q\bm Q^\T)\bm G\|_\F^2$, where $\bm Q=\operatorname{orth}(\bm A^\T\bm A\bm\Omega)$.
This more efficient implementation uses $3r$ matvecs with $\bm{A}$ and $r$ matvecs with $\bm{A}^\T$, for a total of $4r$. Similarly, we implement Nystr\"om++ using the formula $\|\bm Q^\T \bm{A}\|_\F^2 + \frac{1}{r} \|(\bm{I} - \bm Q\bm Q^\T)\bm A \bm G\|_\F^2$, where $\bm{Q} = \operatorname{orth}(\bm{A} \bm{\Omega})$ and $\bm{\Omega}, \bm{G} \sim \gaussian(n,r)$, which only requires $2r$ matvecs with $\bm{A}$ and $r$ matvecs with $\bm{A}^\T$. This formulation can be shown to be equivalent to the original Nyström++ formulation using, e.g., \cite[Lemma 1]{gittensmahoney}.

Again, we test the methods on $1000 \times 1000$ diagonal matrices with a range of spectral decay. The first three matrices satisfy $\sigma_j = j^{-c}$, for $c$ in $\{ 0.25, 1, 4\}$. The fourth matrix has exponential spectral decay: $\sigma_j = \exp(-0.05(j-1))$. We again report the normalized root mean squared error over 200 trials, with results shown in \Cref{fig:hutchflat_mixed_spectra}. For Hutch$\flat$, we use the same sketch allocation as suggested in \Cref{sec:gennystrompp_variance}: $2r$ columns for $\bm\Omega$, $4r$ for $\bm\Psi$, and $2r$ for $\bm G$.

As expected from \Cref{thm:gnpp_variance}, which shows that the variance of Hutch$\flat$ scales with $\|\bm{A} - [\bm{A}]_r\|_\F^2$, Hutch$\flat$ outperforms Hutch\# on matrices with sufficient spectral decay. The plot also shows the advantage of adaptivity in Frobenius norm estimation: while all of the methods tested enjoy the same $O(1/\varepsilon)$ worst-case matvec guarantee, the adaptive Hutch++ and Nystr\"{o}m++ methods perform markedly better in experiments. However, as discussed in \Cref{sec:intro}, such methods may not be applicable in some applications, or might have a higher cost per matvec than the non-adaptive Hutch\# and Hutch$\flat$ algorithms.


\subsection*{Acknowledgements}
DH and CM were supported by the NSF Algorithmic Foundations Program under awards 2427363 and 2045590.

\clearpage
\bibliographystyle{siam}
\bibliography{bibliography}

\appendix
\section{Proof of the Generalized Nystr\"om Error Bound}
\label{app:gn_gram}
In this section, we provide the proof of \cref{lemma:gn_gram}, which is restated below.

\GNGramLemma*

\begin{proof}
Let $\bm Q$ have orthonormal columns spanning $\range(\bm A\bm\Omega)$, and let $\bm Q_\perp$ have orthonormal columns spanning its orthogonal complement.
Write
\begin{align*}
    \bm P := \bm Q\bm Q^\T,\qquad
    \bm E := \bm B-\bm P\bm A.
\end{align*}
Expanding our error as $\bm A^\T\bm A-\bm B^\T\bm B
    =\bm A^\T(\bm{I}-\bm{P})\bm A
      -\bm A^\T\bm P\bm E
      -\bm E^\T\bm P\bm A
      -\bm E^\T\bm E$ and 
applying triangle inequality, we have:
\begin{align}\label{eq:gnpp_gram_split}
    \|\bm A^\T\bm A-\bm B^\T\bm B\|_\F^2
    &\leq \left(\|\bm A^\T(\bm{I}-\bm{P})\bm A\|_\F
    +2\|\bm A^\T\bm P\bm E\|_\F
    +\|\bm E^\T\bm E\|_\F\right)^2 \nonumber\\
    &\leq 3\|\bm A^\T(\bm{I}-\bm{P})\bm A\|_\F^2
    +12\|\bm A^\T\bm P\bm E\|_\F^2
    +3\|\bm E^\T\bm E\|_\F^2.
\end{align}
We bound the expectations of each term in \cref{eq:gnpp_gram_split} separately.

For the first term, note that $\|\bm A^\T(\bm I-\bm Q\bm Q^\T)\bm A\|_\F^2 = \|\bm A-\bm Q\bm Q^\T\bm A\|_{(4)}^4$, which is the Schatten-4 norm error of the standard Randomized SVD approximation to $\bm{A}$. A bound on this error can be found in \cite[Thm. 8.7]{tropp2023randomized}. Specifically, for $r\geq16$, the squared coefficient in that theorem is $(1+(r+1)/(r-3))^2\leq(30/13)^2\leq16/3$, so we obtain:
\begin{align}\label{eq:gnpp_range_schatten4}
\mathbb E\|\bm A^\T(\bm I-\bm Q\bm Q^\T)\bm A\|_\F^2
\leq \frac{16}{3}\left(
\|\bm A-[\bm A]_r\|_{(4)}^2
+\frac{1}{\sqrt r}\|\bm A-[\bm A]_r\|_\F^2
\right)^2.
\end{align}

For the remaining terms, we may assume $\operatorname{rank}(\bm A)\geq2r$; otherwise we have $\bm B=\bm A$ almost surely and the bound is immediate.
Thus $\bm Q$ has $2r$ columns almost surely.
We write:
\begin{align*}
\bm\Psi_1&=\bm\Psi^\T\bm Q\sim\gaussian(4r,2r), & &\text{and} &
\bm\Psi_2&=\bm\Psi^\T\bm Q_\perp\sim\gaussian(4r,m-2r),
\end{align*}
where $\bm Q_\perp$ has orthonormal columns spanning $\bm{Q}$'s complement. $\bm\Psi_1$ and $\bm\Psi_2$ are independent.

We will use the following fact, which is shown, e.g., in \cite[Proof of Lem.~5.1]{ChenDumanKelesHalikiasMuscoMuscoPersson:2025}:
\begin{equation}\label{eq:gnpp_error_identity}
    \bm Q^\T\bm E=\bm\Psi_1^\dagger\bm\Psi_2\bm Q_\perp^\T \bm{A}.
\end{equation}
We also use a Frobenius norm Randomized SVD expected error bound \cite[Proof of Thm.~10.5]{rsvd}:
\begin{align}\label{eq:gnpp_range_second}
    \mathbb E\|\bm Q_\perp^\T\bm A\|_\F^2
    =\mathbb E\|\bm A-\bm Q\bm Q^\T\bm A\|_\F^2
    &\leq\left(1+\frac{r}{r-1}\right)\|\bm A-[\bm A]_r\|_\F^2
    \leq\frac{31}{15}\|\bm A-[\bm A]_r\|_\F^2.
\end{align}
Using that $\bm P=\bm Q\bm Q^\T$ and \cref{eq:gnpp_error_identity}, we have
\begin{align*}
\bm A^\T\bm P\bm E
&=\bm A^\T\bm Q(\bm Q^\T\bm E)
=\bm A^\T\bm Q\bm\Psi_1^\dagger\bm\Psi_2\bm Q_\perp^\T\bm A.
\end{align*}
We condition on $\bm\Omega$ and $\bm\Psi_1$ and apply
\cite[eq.~(A.1a)]{perssonboullekressner2025}, which states that
$\mathbb E\|\bm C\bm\Gamma\bm D\|_\F^2=\|\bm C\|_\F^2\|\bm D\|_\F^2$ for fixed $\bm C,\bm D$ and a standard Gaussian $\bm\Gamma$. This gives
\begin{align*}
\mathbb E_{\bm\Psi_2}\!\left[\|\bm A^\T\bm P\bm E\|_\F^2
\middle|\bm\Omega,\bm\Psi_1\right]
&=\mathbb E_{\bm\Psi_2}\!\left[\|\bm A^\T\bm Q\bm\Psi_1^\dagger\bm\Psi_2\bm Q_\perp^\T\bm A\|_\F^2
\middle|\bm\Omega,\bm\Psi_1\right]\\
&=\|\bm A^\T\bm Q\bm\Psi_1^\dagger\|_\F^2
\|\bm Q_\perp^\T\bm A\|_\F^2.
\end{align*}
We can then apply \cite[eq.~(A.2a)]{perssonboullekressner2025}  to $\|\bm A^\T\bm Q\bm\Psi_1^\dagger\|_\F^2$ to obtain:
\begin{align*}
\mathbb E_{\bm\Psi_1}\!\left[\|\bm A^\T\bm Q\bm\Psi_1^\dagger\|_\F^2
\middle|\bm\Omega\right]
&=\frac{1}{2r-1}\|\bm Q^\T\bm A\|_\F^2.
\end{align*}
Averaging over $\bm\Omega$, we have:
\begin{align}\label{eq:gnpp_cross_term}
    \mathbb E\|\bm A^\T\bm P\bm E\|_\F^2 
    =\frac{1}{2r-1}\mathbb E_{\bm \Omega}\left[
        \|\bm Q^\T\bm A\|_\F^2\|\bm Q_\perp^\T \bm{A}\|_\F^2\right] 
     &\leq \frac{1}{2r-1}\|\bm A\|_\F^2\, \mathbb E_{\bm \Omega}\left[
        \|\bm Q_\perp^\T \bm{A}\|_\F^2\right] \nonumber\\
    &\leq\frac{1}{2r-1}\left(1+\frac{r}{r-1}\right)\|\bm A\|_\F^2\|\bm A-[\bm A]_r\|_\F^2\nonumber \\
     &\leq\frac{16}{15r} \|\bm A\|_\F^2\|\bm A-[\bm A]_r\|_\F^2.
\end{align}
The second to last step follows from \cref{eq:gnpp_range_second} and the last from using that $r \geq 16$.

It remains to bound $\mathbb E\|\bm E^\T\bm E\|_\F^2$. Observe that $\bm{E} = \bm{B} - \bm{P}\bm{A}$ has columns in the span of $\bm{Q}$. 
Hence $\bm E=\bm Q\bm Q^\T\bm E$ and $\bm E^\T\bm E=(\bm Q^\T\bm E)^\T(\bm Q^\T\bm E)$.
We thus have:
\begin{equation}\label{eq:gnpp_EtE_schatten}
    \|\bm E^\T\bm E\|_\F^2
    =\|\bm Q^\T\bm E\|_{(4)}^4
    =\left\|\bm\Psi_1^\dagger\bm\Psi_2\bm Q_\perp^\T\bm A\right\|_{(4)}^4.
\end{equation}
We next apply a Schatten-4 analogue of an identity used early: for fixed matrices $\bm C$ and $\bm D$ and a standard Gaussian matrix $\bm\Gamma$ of compatible size, \cite[eq.~(A.1b)]{perssonboullekressner2025} gives
\begin{equation}\label{eq:pbk_schatten4}
    \mathbb E\left[\|\bm C\bm\Gamma\bm D\|_{(4)}^4\right]
    =\|\bm C\|_{(4)}^4\|\bm D\|_{(4)}^4
    +\|\bm C\|_\F^4\|\bm D\|_{(4)}^4
    +\|\bm C\|_{(4)}^4\|\bm D\|_\F^4.
\end{equation}
Conditioning on $\bm\Omega$ and $\bm\Psi_1$, we can apply \cref{eq:pbk_schatten4} to \cref{eq:gnpp_EtE_schatten} to obtain:
\begin{align}\label{eq:gnpp_EtE_psi2}
    \mathbb E_{\bm\Psi_2}\left[\|\bm E^\T\bm E\|_\F^2\middle|\bm\Omega,\bm\Psi_1\right]
    ={}&\left(\|\bm\Psi_1^\dagger\|_{(4)}^4+\|\bm\Psi_1^\dagger\|_\F^4\right)
    \left\|\bm Q_\perp^\T\bm A\right\|_{(4)}^4
    +\|\bm\Psi_1^\dagger\|_{(4)}^4\left\|\bm Q_\perp^\T\bm A\right\|_\F^4.
\end{align}
Exact values for $\|\bm\Psi_1^\dagger\|_{(4)}^4$ and $\|\bm\Psi_1^\dagger\|_\F^4$ are provided in \cite[eqs.~(A.4a) and (A.4b)]{perssonboullekressner2025}. We apply those identities, noting that $\bm\Psi_1$ has dimensions $4r \times 2r$. We obtain:
\begin{align*}
    \mathbb E\|\bm\Psi_1^\dagger\|_{(4)}^4
    &
    =\frac{4r-1}{(2r-1)(2r-3)} &&\text{and}&
    \mathbb E\|\bm\Psi_1^\dagger\|_\F^4
    &=
    \frac{4r^2-4r+2}{(2r-1)(2r-3)}.
\end{align*}
Substituting in into \cref{eq:gnpp_EtE_psi2} and using that $r \geq 16$, we obtain:
\begin{align}\label{eq:gnpp_EtE_omega}
    \mathbb E\!\left[\|\bm E^\T\bm E\|_\F^2\middle|\bm\Omega\right]
    &=\frac{4r^2+1}{(2r-1)(2r-3)}\|\bm Q_\perp^\T\bm A\|_{(4)}^4
    +\frac{4r-1}{(2r-1)(2r-3)}\|\bm Q_\perp^\T\bm A\|_\F^4\nonumber\\
    &\leq\frac87\|\bm Q_\perp^\T\bm A\|_{(4)}^4+\frac{9}{8r}\|\bm Q_\perp^\T\bm A\|_\F^4.
\end{align}
Finally, we average \cref{eq:gnpp_EtE_omega} over $\bm\Omega$, handling the two terms differently. This first term equals:
\begin{align*}
    \|\bm Q_\perp^\T\bm A\|_{(4)}^4
    =\|\bm A^\T\bm Q_\perp\bm Q_\perp^\T\bm A\|_\F^2
    =\|\bm A^\T(\bm I-\bm P)\bm A\|_\F^2,
\end{align*}
which is precisely the quantity bounded by \cref{eq:gnpp_range_schatten4}.
For the second term, we use $\|\bm Q_\perp^\T\bm A\|_\F^2\leq\|\bm A\|_\F^2$ to write $\|\bm Q_\perp^\T\bm A\|_\F^4\leq\|\bm A\|_\F^2\|\bm Q_\perp^\T\bm A\|_\F^2$ and then apply \cref{eq:gnpp_range_second}.
Plugging into \cref{eq:gnpp_EtE_omega}, we obtain:
\begin{align}\label{eq:gnpp_quadratic_term}
    \mathbb E\|\bm E^\T\bm E\|_\F^2
    \leq \frac{128}{21}\left(\|\bm A-[\bm A]_r\|_{(4)}^2
    +\frac{\|\bm A-[\bm A]_r\|_\F^2}{\sqrt r}\right)^2
    +\frac{93}{40r}\|\bm A\|_\F^2\|\bm A-[\bm A]_r\|_\F^2.
\end{align}
Combining \cref{eq:gnpp_gram_split}, \cref{eq:gnpp_range_schatten4}, \cref{eq:gnpp_cross_term}, and \cref{eq:gnpp_quadratic_term}, and rounding the coefficients $3\cdot(16/3)+3\cdot(128/21)=240/7\leq35$ and $12\cdot(16/15)+3\cdot(93/40)=791/40\leq20$ proves \cref{eq:gnpp_gram_bound}.
\end{proof}

\end{document}